\documentclass[a4paper, english, 11pt]{amsart}

\usepackage[T1]{fontenc}
\usepackage{babel}
\usepackage{amsmath, amsfonts, amssymb, amsthm} % Mattepakken
\usepackage{url}      % URL formatting
\usepackage[numbers]{natbib}   % Bibliography
\usepackage{mathrsfs}

% Theorem and equation formatting

\numberwithin{equation}{section}

\newtheoremstyle{thm}{10pt}{5pt}{}{}{\bfseries}{}{ }{\thmname{#1}
  \thmnumber{#2} \thmnote{(#3)}}

\theoremstyle{thm}
\newtheorem{theorem}{Theorem}[section]

\newtheorem{lemma}[theorem]{Lemma}

\newtheorem{proposition}[theorem]{Proposition}

\newtheorem{remark}[theorem]{Remark}
\theoremstyle{remark}

% New operators and commands

%\renewcommand{\chi}{E}

\DeclareMathOperator{\dom}{dom}

\begin{document}

\title{The $K$-theory of some reduced inverse semigroup $C^\ast$-algebras.}
\author{Magnus Dahler Norling}
\address{Magnus Dahler Norling, Institute of Mathematics, University of Oslo, P.b. 1053 Blindern, 0316 Oslo, Norway.}
\email{magnudn@math.uio.no}
\subjclass[2010]{Primary 46L05}
\keywords{$C^\ast$-algebras, inverse semigroups, $K$-theory}

\begin{abstract}
We use a recent result by Cuntz, Echterhoff and Li about the $K$-theory of certain reduced $C^\ast$-crossed products to describe the $K$-theory of $C^\ast_r(S)$ when $S$ is an inverse semigroup satisfying certain requirements. A result of Milan and Steinberg allows us to show that $C^\ast_r(S)$ is Morita equivalent to a crossed product of the type handled by Cuntz, Echterhoff and Li. We apply our result to graph inverse semigroups and the inverse semigroups of one-dimensional tilings.
\end{abstract}

\maketitle

\section{Introduction}
A \emph{semigroup} is a set $P$ with an associative binary operation. It is a \emph{monoid} if it has an identity element. An \emph{inverse semigroup} is a semigroup $S$ where for every $s\in S$ there is a unique element $s^\ast\in S$ satisfying
\[
ss^\ast s=s\qquad s^\ast ss^\ast=s^\ast.
\]
We will assume that every inverse semigroup we are working with has a zero element $0=0_S$ and that $S$ is countable. Let $E=E(S)=\{ss^\ast:s\in S\}$ be the set of idempotents in $S$. Then $E$ is a commutative idempotent semigroup, i.e. a \emph{semilattice}. Let $S^\times=S\setminus\{0\}$, and if $S$ does not have an identity element let $S^1=S\cup\{1\}$ with the obvious operations. If $S$ has an identity element, let $S^1=S$. See \cite{lawson98, paterson99} for general references on inverse semigroups.

Every inverse semigroup comes equipped with a \emph{natural partial order} given by $s\leq t$ if there is some $e\in E$ such that $s=et$, equivalently, there is some $f\in E$ such that $s=tf$. This is also equivalent to $s=ss^\ast t$ and to $s=ts^\ast s$.The inverse semigroup $S$ is said to be $0$-$F$-inverse if every element in $S^\times$ is beneath a unique maximal element with respect to the natural partial order. In this case, let $M(S)$ denote the set of maximal elements in $S$.

Let $G$ be a group. A \emph{morphism} \cite{kellendonk_lawson04} from $S^\times$ to $G$ (sometimes called \emph{grading} of $S$) is a map $\sigma:S^\times\to G$ satisfying
\[
 \sigma(st)=\sigma(s)\sigma(t)\mbox{ whenever }st\neq 0
\]
Note that $\sigma(e)=1$ for every $e\in E$. The morphism $\sigma$ is said to be \emph{idempotent pure} if $\sigma(s)=1$ implies $s\in E$ for any $s\in S$. It is easy to see that if $S$ is $0$-$F$-inverse and $\sigma:S^\times\to G$ is a morphism that is injective on $M(S)$, then $\sigma$ is idempotent pure.

Associated to every inverse semigroup $S$ there is a $C^\ast$-algebra $C^\ast(S)$ that is universal for all $\ast$-representations of $S$. Similarly there is a reduced $C^\ast$-algebra $C^\ast_r(S)$ which is the image of the left regular representation
\[
\Lambda:C^\ast(S)\to B(\ell^2(S)).
\]
We use here the convention that all $\ast$-representations of $S$ are to send the $0$-element of $S$ to $0$. This convention coincides with the one used in \cite{milan_steinberg11}. See otherwise \cite{paterson99} for an introduction to $C^\ast$-algebras associated with inverse semigroups.

The main purpose of the paper is to provide a formula describing the $K$-theory of $C^\ast_r(S)$. The key ingredient will be a recent result from \cite{cuntz_echterhoff_li12} concerning the $K$-theory of some crossed products $C_0(\Omega)\rtimes_r G$ where $G$ is a group and $\Omega$ is a locally compact totally disconnected (i.e. zero-dimensional) Hausdorff space. We will prove the following main theorem by showing that $C^\ast_r(S)$ is strongly Morita equivalent to such a crossed product under suitable conditions.

\begin{theorem}\label{thm:maintheorem}
Let $S$ be a $0$-$F$-inverse semigroup, and suppose there exists a group $G$ and a morphism $\sigma:(S^1)^\times\to G$ that is injective on the set of maximal elements in $S^1$. Let $\approx$ be the equivalence relation on $E$ given by $e\approx f$ if there is some $s\in S$ such that $s^\ast s=e$ and $ss^\ast=f$. Let $\widetilde{E}$ be the set $E^\times/\approx$. For each $e\in E$, let $G_e=\{\sigma(s):ss^\ast=s^\ast s=e\}$. If $G$ is a-T-menable \cite{aTmenability01}, then
\[
 K_\ast(C^\ast_r(S))\simeq \bigoplus_{[e]\in\widetilde{E}}K_\ast(C^\ast_r(G_e)).
\]
\end{theorem}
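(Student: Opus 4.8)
The plan is to pass from the inverse semigroup $S$ to an étale groupoid, to recognise this groupoid as (the reduction of) a transformation groupoid via the grading $\sigma$, and finally to feed the resulting crossed product into the theorem of Cuntz, Echterhoff and Li. First I would realise $C^\ast_r(S)$ as the reduced $C^\ast$-algebra of the universal (Paterson) groupoid $\mathcal{G}=\mathcal{G}_u(S)$: this is an étale groupoid whose unit space $\widehat{E}$ is the space of nonzero characters (equivalently proper filters) of the semilattice $E$, and $\mathcal{G}$ is the groupoid of germs of the canonical action of $S$ on $\widehat{E}$. Since $E$ is a countable semilattice with zero, $\widehat{E}$ is a locally compact, Hausdorff, totally disconnected space, and the sets $D_e=\{\chi\in\widehat{E}:\chi(e)=1\}$ for $e\in E^\times$ form a basis of compact open sets. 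The grading $\sigma$ induces a continuous cocycle $\tilde\sigma:\mathcal{G}\to G$, $\tilde\sigma([s,\chi])=\sigma(s)$, and the observation in the introduction (injectivity of $\sigma$ on the maximal elements of $S^1$ forces idempotent purity) gives $\tilde\sigma^{-1}(1)=\mathcal{G}^{(0)}$.

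Next I would invoke the result of Milan and Steinberg \cite{milan_steinberg11}. Because $\tilde\sigma$ is a cocycle whose fibre over the identity is exactly the unit space, the map $g\mapsto(r(g),\tilde\sigma(g))$ embeds $\mathcal{G}$ as an open subgroupoid of the transformation groupoid $\widehat{E}\rtimes G$; taking $\Omega$ to be the $G$-saturation $G\cdot\widehat{E}$ realises $\mathcal{G}$ as the reduction $(\Omega\rtimes G)|_{\widehat{E}}$ to an open complete transversal. Consequently $C^\ast_r(S)=C^\ast_r(\mathcal{G})$ is strongly Morita equivalent to $C^\ast_r(\Omega\rtimes G)=C_0(\Omega)\rtimes_r G$, with $\Omega$ locally compact, Hausdorff and totally disconnected---precisely the situation treated in \cite{cuntz_echterhoff_li12}.

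The third step is to apply the Cuntz--Echterhoff--Li theorem to $C_0(\Omega)\rtimes_r G$. Here a-T-menability of $G$ enters: by Higson--Kasparov it guarantees the Baum--Connes conjecture with coefficients, which is the hypothesis under which their $K$-theory formula holds. Their result expresses $K_\ast(C_0(\Omega)\rtimes_r G)$ as a direct sum, indexed by the $G$-orbits of the distinguished compact open subsets of $\Omega$, of the groups $K_\ast(C^\ast_r(G_x))$, where $G_x$ is the stabiliser of an orbit representative. It then remains to identify this index set and these stabilisers in terms of $S$. Under $g=\sigma(s)$ the basic set $D_e$ is carried to $D_f$ exactly when $s^\ast s=e$ and $ss^\ast=f$; hence the orbit of $D_e$ is $\{D_f:f\approx e\}$ and the orbit space is $\widetilde{E}=E^\times/\approx$. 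For the stabiliser, $g\cdot D_e=D_e$ forces a representing $s$ fixing the principal character $\hat e$, and replacing $s$ by $es$ we may assume $s^\ast s=ss^\ast=e$, so that the stabiliser of $D_e$, equivalently the isotropy at $\hat e$, is exactly $G_e$. Substituting these identifications into the Cuntz--Echterhoff--Li formula yields
\[
K_\ast(C^\ast_r(S))\cong K_\ast(C_0(\Omega)\rtimes_r G)\cong\bigoplus_{[e]\in\widetilde{E}}K_\ast(C^\ast_r(G_e)),
\]
as claimed.

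I expect the main obstacle to lie in the middle and final steps, namely in making the transition from the semigroup to the dynamical picture precise and matching the two sides. Concretely, one must check that $\mathcal{G}$ really is the reduction of $\Omega\rtimes G$ to an open transversal (so that the Morita equivalence is available) and, above all, that the structural hypotheses of \cite{cuntz_echterhoff_li12} are met by the pair $(\Omega,G)$---in particular that the $G$-action on the basis of compact open sets has the regularity their argument requires. The orbit computation $\{D_f:f\approx e\}$ and the isotropy computation $G_{\hat e}=G_e$ are elementary manipulations once idempotent purity is in hand, but verifying that the Cuntz--Echterhoff--Li decomposition is genuinely indexed by the orbits of these basic clopen sets, rather than by orbits of arbitrary (possibly non-principal) characters, is the delicate point that ties the whole argument together.
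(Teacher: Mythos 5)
Your proposal follows essentially the same route as the paper: realize $C^\ast_r(S)$ as $C^\ast_r(\mathcal{G}_u)$ for the universal groupoid, use the cocycle induced by $\sigma$ to obtain a strong Morita equivalence with $C_0(\Omega)\rtimes_r G$ for the enveloping action, apply the Cuntz--Echterhoff--Li theorem, and identify the orbit space of the basic compact open sets with $\widetilde{E}$ and the stabilizers with $G_e$; your orbit and stabilizer computations agree with the paper's Lemmas \ref{lem:orbitclass} and \ref{lem:stabgroup}.

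There is, however, one step that fails as literally written: you cannot invoke Milan--Steinberg off the shelf, because their Morita equivalence theorem is stated for $G$ the \emph{universal} group of $S$, while the theorem here allows an arbitrary group with a grading injective on $M(S^1)$. The paper flags exactly this and redoes the step via Khoshkam--Skandalis, for which the cocycle $\rho([s,\phi])=\sigma(s)$ must be faithful, \emph{closed} and \emph{transverse} --- and your observation that $\tilde\sigma^{-1}(1)=\mathcal{G}_u^{(0)}$ (idempotent purity) only delivers faithfulness; it does not by itself make $\mathcal{G}_u$ a reduction of a transformation groupoid with Hausdorff enveloping space. Closedness is where the $0$-$F$ hypothesis and injectivity on maximal elements do genuine topological work: for $g\neq 1$ the fibre $\rho^{-1}(g)$ collapses to the single bisection $\Theta(s_g,D_{s_g^\ast s_g})$, whose $d$- and $r$-images are the closed sets $D_{s_g^\ast s_g}$ and $D_{s_gs_g^\ast}$. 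Separately, the ``regularity'' you flag but leave unverified is the paper's main technical content (Propositions \ref{prop:basistranslate} and \ref{prop:visbasis}): one must show $\{D_e:e\in E^\times\}$ is a regular basis for $\widehat{E}$ --- independence via the principal characters $\phi_e$ of equation \eqref{eq:minimalcharacter}, generation from the basis of sets $D_{(e,Z)}$ together with compactness --- and then transport it to a $G$-invariant regular basis of $\Omega$ using $\tau_g(D_e)\cap\widehat{E}=D_{s_ges_g^\ast}$ (Lemma \ref{lem:gaction}). With these two items supplied, your argument coincides with the paper's proof.
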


Many important inverse semigroups are $0$-$F$ inverse. One example is the left inverse hull associated to a submonoid og a group satisfying the Toeplitz condition of \cite{li12}. Using this we will show that our main theorem generalizes \cite[Corollary 4.9]{cuntz_echterhoff_li12} which describes the $K$-theory of the reduced $C^\ast$-algebra of such monoids. Other examples are graph inverse semigroups \cite{paterson02} and some tiling and point set inverse semigroups \cite{kellendonk_lawson04}. The reduced $C^\ast$-algebra of the graph inverse semigroup is the Toeplitz $C^\ast$-algebra of the graph as defined in \cite{raeburn05}. We calculate the $K$-theory for some of these examples. In general, the reduced $C^\ast$-algebra of $S$ is not the only one of interest, but also its tight $C^\ast$-algebra as defined in \cite{exel08}. For instance, the Cuntz-Krieger graph $C^\ast$-algebras and some of the tiling $C^\ast$-algebras \cite{kellendonk_putnam00} are the tight $C^\ast$-algebras of their respective inverse semigroups. The tight $C^\ast$-algebra is a quotient of the reduced $C^\ast$-algebra in the stated examples. We hope that the results of this paper may be useful in the future when trying to describe the $K$-theory of these tight $C^\ast$-algebras.

\section{Preliminaries on the construction of the Morita enveloping action}

In this and the following section we will fix a $0$-$F$-inverse semigroup $S$, and we will suppose that there exists a group $G$ and a morphism $\sigma:(S^1)^\times\to G$ that is injective on $M(S^1)$. Note that if $S$ is any inverse semigroup and $\sigma:S^\times\to G$ is a morphism such that $\sigma^{-1}(g)$ has a unique maximal element for each $g\in G$, then $S$ is $0$-$F$-inverse.

Our exposition in the first part of this section follows \cite{paterson99,exel08}. Let $S$ be an inverse semigroup and let $E$ be its semilattice of idempotents. A \emph{character} on $E$ is a homomorphism $\phi:E\to \{0,1\}$ that satisfies $\phi(0)=0$ and $\phi(e)=1$ for at least one $e\in E$. Here $\{0,1\}$ is considered a semilattice in the obvious way. Let $\widehat{E}\subset \{0,1\}^E$ be the set of all characters on $E$ with the relative topology inherited from the product topology on $\{0,1\}^E$. The locally compact space $\widehat{E}$ is called the \emph{spectrum} of $E$. For each $e\in E$, let
\[
 D_e=\{\phi\in\widehat{E}:\phi(e)=1\}.
\]
Then $D_e$ is a compact open subset of $\widehat{E}$. For each $e\in E$, let $\phi_e$ be the character given by
\begin{equation}\label{eq:minimalcharacter}
 \phi_e(f)=\begin{cases}
            1\mbox{ if }e\leq f\\
            0\mbox{ otherwise.}
           \end{cases}
\end{equation}
For any $e,f\in E^\times$ it is easy to show that $\phi_e=\phi_f$ if and only if $e=f$. Recall  that there is an action $\theta$ of $S$ by partial homeomorphisms on $\widehat{E}$ given by
\begin{align*}
\theta_s:D_{s^\ast s}&\to D_{ss^\ast},\\
\theta_s(\phi)(e)&=\phi(s^\ast es),\quad\phi\in D_{s^\ast s},e\in E.
\end{align*}
One can show that $\theta_s$ is a homeomorphism for each $s\in S$, and $\theta_s^{-1}=\theta_{s^\ast}$. In \cite{milan_steinberg11} the partial action $\beta$ of $G$ on $\widehat{E}$ is defined as follows
\[
\beta_g=\bigcup_{\sigma(s)=g}\theta_s.
\]
Since we are working with a $0$-$F$-inverse semigroup and since $\sigma$ is injective on $M(S^1)$, this expression can be simplified. Let $s_g$ be the uniqe element in $M(S^1)$ satisfying $\sigma(s_g)=g$. Then
\[
\beta_g=\theta_{s_g}.
\]
We now recall from \cite{abadie03} the construction of the Morita enveloping action $(\Omega, G, \tau)$ of a partial action $(X,G,\beta)$. Define an equivalence relation $\sim$ on $G\times X$ by $(g,x)\sim (h,y)$ if $x\in \dom(\beta_{h^{-1}g})$ and $\beta_{h^{-1}g}(x)=y$. Define $\Omega=\Omega_X=(G\times X)/\sim$ with the quotient topology, and let $[g,x]$ denote the equivalence class of $(g,x)\in G\times X$. Let $G$ act on $\Omega$ by
\[
\tau_g([h,x])=[gh,x].
\]
By \cite[Theorem 1.1]{abadie03}, the map $\iota:x\mapsto [1,x]$ defines a homeomorphism $X\to\iota(X)$, with $\iota(X)$ open in $\Omega$. Moreover, for all $g\in G$ and $x\in d(\theta_g)$,
\begin{equation}\label{eq:intertwine}
\iota(\theta_g(x))=\tau_g(\iota(x))
\end{equation}
It also follows that the orbit of $\iota(X)$ in $\Omega$ is all of $\Omega$. We will from here on omit writing $\iota$, and view $X$ as a subset of $\Omega$.

Note in that \cite{milan_steinberg11} Milan and Steinberg assume that $G$ is the universal group of $S$, but we want a more general result and do not want to demand that $G$ is the universal group. Because of this we have to redo their proof that $C^\ast_r(S)$ is strongly Morita equivalent to $C_0(\Omega)\rtimes_r G$ using a result from \cite{khoshkam_skandalis02}.

The \emph{universal groupoid} $\mathcal{G}_u$ of $S$ is the groupoid of germs for $\theta$ \cite{exel08}, and can be defined as follows. For $s,t\in S^\times$ and $\phi\in D_{s^\ast s}, \psi\in D_{t^\ast t}$, let $(s,\phi)\sim (t,\psi)$ if and only if $\phi=\psi$ and there exists $u\leq s,t$ such that $\phi\in D_{u^\ast u}$. Let $[s,\phi]$ denote the equivalence class of $(s,\phi)$. Then define
\begin{align*}
\mathcal{G}_u&=\{[s,\phi]:s\in S^\times,\phi\in D_{s^\ast s}\},\\
\mathcal{G}_u^{(2)}&=\{([s,\phi],[t,\psi])\in\mathcal{G}_u\times\mathcal{G}_u:\phi=\theta_t(\psi)\}
\end{align*}
For $([s,\phi],[t,\psi])\in\mathcal{G}_u^{(2)}$, let
\begin{align*}
[s,\phi][t,\psi]&=[st,\psi],\\
[s,\phi]^{-1}&=[s^\ast,\theta_s(\phi)]
\end{align*}
By \cite[Proposition 4.11]{exel08}, the unit space $\mathcal{G}_u^{(0)}$ can be identified with $\widehat{E}$ by sending $[e,\phi]$ to $\phi$, where $e\in E$ and $\phi\in D_e$. The $d$ and $r$ maps from $\mathcal{G}_u$ to $\mathcal{G}_u^{(0)}=\widehat{E}$ are given by
\begin{align*}
d([s,\phi])&=\phi,\\
r([s,\phi])&=\theta_s(\phi).
\end{align*}
For each $s\in S$ and open $U\subset D_{s^\ast s}$, let
\[
 \Theta(s,U)=\{[s,\phi]:\phi\in U\}
\]
The collection
\[
\{\Theta(s,U):s\in S, U\subset D_{s^\ast s}\mbox{ is open}\}
\]
is a basis for the topology on $\mathcal{G}_u$. Then for each $s\in S$, the restrictions
\[
d:\Theta(s,D_{s^\ast s})\to D_{s^\ast s},\qquad r:\Theta(s,D_{s^\ast s})\to D_{ss^\ast}
\]
are homeomorphisms. The reduced $C^\ast$-algebra of $S$ is isomorphic to the reduced $C^\ast$-algebra of $\mathcal{G}_u$ \cite{paterson99}.

In \cite{khoshkam_skandalis02}, a \emph{continuous cocycle} on a groupoid $\mathcal{G}$ is defined to be a continuous groupoid homomorphism $\rho:\mathcal{G}\to H$, where $H$ is a group. The cocycle $\rho$ is said to be
\begin{enumerate}
\item \emph{faithful} if the map $\mathcal{G}\mapsto\mathcal{G}^{(0)}\times H\times\mathcal{G}^{(0)}$ given by $\gamma\mapsto (r(\gamma),\rho(\gamma),d(\gamma))$ is injective.
\item \emph{closed} if the map $\gamma\mapsto (r(\gamma),\rho(\gamma),d(\gamma))$ is closed.
\item \emph{transverse} if the map $H\times\mathcal{G}\to H\times\mathcal{G}$ given by $(\gamma,g)\mapsto (g\rho(\gamma),d(\gamma))$ is open.
\end{enumerate}

If $\rho$ is faithful, closed and transverse, one defines the \emph{enveloping action} of $\rho$ as follows. Let $\sim$ be the equivalence relation on $H\times\mathcal{G}^{(0)}$ given by $(g,x)\sim (h,y)$ if there exists a $\gamma\in\mathcal{G}$ with $d(\gamma)=y$, $r(\gamma)=x$ and $\rho(\gamma)=h^{-1}g$. Let $\Omega$ be the space $(H\times\mathcal{G}^{(0)})/\sim$ with the quotient topology, and let $\tau$ be the action of $H$ on $\Omega$ given by $\tau_g([h,x])=[gh,x]$. By \cite[Lemma 1.7 and Theorem 1.8]{khoshkam_skandalis02}, $\mathcal{G}$ is Morita equivalent to the transformation groupoid $H\ltimes\Omega$, and so $C^\ast_r(\mathcal{G})$ is strongly Morita equivalent to $C_0(\Omega)\rtimes_{\tau,r}H$.

Let $\mathcal{G}_u$ be the universal groupoid of $S$ and let $\sigma:(S^1)^\times\to G$ be the morphism specified earlier. Define a map $\rho:\mathcal{G}_u\to G$ by
\[
\rho([s,\phi])=\sigma(s).
\]
To see that this is well defined, note that if $0\neq u\leq s,t$, then $\sigma(u)=\sigma(s)=\sigma(t)$. It is straightforward to check that $\rho$ is a continuous cocycle. Inspection reveals that the enveloping action of $\rho$ is the Morita enveloping action of $\beta$. To see this, note that the equivalence relation one obtains on $G\times\widehat{E}$ is the same in either case. It remains to prove the following proposition. The proof closely follows the proof of \cite[Propositions 3.6 and 3.9]{khoshkam_skandalis02}.

\begin{proposition}
The cocycle $\rho:\mathcal{G}_u\to G$ is faithful, closed and transverse.
\end{proposition}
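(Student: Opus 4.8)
The plan is to verify the three properties in turn, treating faithfulness and closedness as in the proof of \cite[Proposition 3.6]{khoshkam_skandalis02} and transversality as in \cite[Proposition 3.9]{khoshkam_skandalis02}, but replacing the arguments there with ones adapted to the $0$-$F$-inverse structure. The recurring tool is an explicit description of the fibres of $\rho$. Since $\sigma$ is injective on $M(S^1)$, any $s\in S^\times$ with $\sigma(s)=g$ lies beneath $s_g$: its maximal element has $\sigma$-value $g$, and uniqueness forces that element to equal $s_g$. Consequently $[s,\phi]=[s_g,\phi]$, so that $\rho^{-1}(g)=\Theta(s_g,D_{s_g^\ast s_g})$ whenever $s_g\in S^\times$, while $\rho^{-1}(g)$ is the whole unit space $\widehat{E}$ in the exceptional case where $g$ is the identity of $G$, $S$ has no identity, and the relevant maximal element is the adjoined $1$.

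For faithfulness I would show directly that the pair $(\rho,d)$ already separates the points of $\mathcal{G}_u$. Suppose $[s,\phi]$ and $[t,\psi]$ satisfy $\sigma(s)=\sigma(t)=g$ and $\phi=\psi$; then $\phi\in D_{s^\ast s}\cap D_{t^\ast t}$ and, by the previous paragraph, $s,t\leq s_g$. I would then set $u=s_g(s^\ast s)(t^\ast t)$ and check the algebra: using $s=s_g s^\ast s$ and $t=s_g t^\ast t$ one gets $u=s(t^\ast t)=t(s^\ast s)$, so $u\leq s$ and $u\leq t$, while $u^\ast u=s^\ast s\,t^\ast t$, whence $\phi(u^\ast u)=\phi(s^\ast s)\phi(t^\ast t)=1$, i.e.\ $\phi\in D_{u^\ast u}$. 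This $u$ is exactly the common lower bound required by the germ relation, so $[s,\phi]=[t,\psi]$ and the map $\gamma\mapsto(r(\gamma),\rho(\gamma),d(\gamma))$ is injective.

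Transversality is the most routine step: because $G$ is discrete, the sets with groupoid-part $\Theta(s,U)$ and group-part $\{h\}$ form a basis for $G\times\mathcal{G}_u$, and the image of such a set under $(\gamma,g)\mapsto(g\rho(\gamma),d(\gamma))$ is exactly $\{h\sigma(s)\}\times U$; this is open in $G\times\widehat{E}$, and hence in $G\times\mathcal{G}_u$ since the unit space of the \'etale groupoid $\mathcal{G}_u$ is open. For closedness I would again use discreteness of $G$, so that $\mathcal{G}_u=\bigsqcup_{g\in G}\rho^{-1}(g)$ is a partition into clopen sets and it suffices to show that $\Phi:=(r,\rho,d)$ restricts to a closed map on each fibre. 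On $\rho^{-1}(g)$ the map $\Phi$ is a homeomorphism onto the graph $\{(\theta_{s_g}(\phi),g,\phi):\phi\in\dom\theta_{s_g}\}$, interpreting $\theta_{s_g}$ as the identity of $\widehat{E}$ in the exceptional case; I then must check this graph is closed in $\widehat{E}\times\{g\}\times\widehat{E}$. When $s_g\in S^\times$ the domain $D_{s_g^\ast s_g}$ is compact, so the graph is compact and therefore closed, while in the exceptional case the graph is the full diagonal of $\widehat{E}\times\widehat{E}$, which is closed because $\widehat{E}$ is Hausdorff.

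I expect the main obstacle to be closedness, and precisely at the fibre over the identity of $G$, where the domain of the relevant partial homeomorphism is the non-compact space $\widehat{E}$: in general the graph of a partial homeomorphism between open sets need not be closed and the compactness argument is unavailable, so one must notice that on this fibre the map is the identity and the graph is the closed diagonal. The algebraic construction of the common lower bound $u$ in the faithfulness step is where the hypotheses that $S$ is $0$-$F$-inverse and that $\sigma$ is injective on $M(S^1)$ are genuinely used.
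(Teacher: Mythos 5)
Your proposal is correct and takes essentially the same approach as the paper: the same fibre description $\rho^{-1}(g)=\Theta(s_g,D_{s_g^\ast s_g})$, the same common lower bound for faithfulness (your $u=s_g(s^\ast s)(t^\ast t)=st^\ast t$ is the paper's $u=ts^\ast s$, which the paper reaches via idempotent purity of $\sigma$ instead of via $s,t\leq s_g$), and the same discreteness-plus-compactness argument for closedness, with the fibre over $1\in G$ handled as the closed diagonal of the unit space just as in the paper. Your explicit basis computation for transversality is merely a more detailed version of the paper's one-line reduction.
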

\begin{proof}
We prove faithfulness first. Let $\gamma=[s,\phi]\in\mathcal{G}_u$. Then
\[
(r(\gamma)\rho(\gamma),d(\gamma))=(\theta_s(\phi),\sigma(s),\phi).
\]
Suppose $(\theta_s(\phi),\sigma(s),\phi)=(\theta_t(\psi),\sigma(t),\psi)$. Then $\phi=\psi$ and $\sigma(s)=\sigma(t)$. Since $\phi\in D_{t^\ast t}\cap D_{s^\ast s}=D_{t^\ast ts^\ast s}$, $t^\ast ts^\ast s\neq 0$. So $ts^\ast\neq 0$, and 
\[
\sigma(ts^\ast)=\sigma(t)\sigma(s)^{-1}=\sigma(t)\sigma(t)^{-1}=1.
\]
Thus $ts^\ast$ is idempotent. Let $u=ts^\ast s$. Then $0\neq u\leq s,t$ and $u^\ast u=s^\ast st^\ast t$, so $\phi\in D_{u^\ast u}$. It follows that $[s,\phi]=[t,\psi]$.

Next we prove closedness. Let $X\subset\mathcal{G}_u$ be closed. Then
\[
 (\rho,r,d)(X)=\bigcup_{g\in\sigma(S^\times)}\{g\}\times\{r(\gamma),d(\gamma)):\gamma\in X,\rho(\gamma)=g\}.
\]
Since $G$ is discrete, we only have to show that
\[
\{(d(\gamma),r(\gamma)):\gamma\in X,\rho(\gamma)=g\}
\]
is closed for each $g\in\sigma(S^\times)$. If $g=1$, then this set is just $(X\cap\widehat{E})^2$. Since $\widehat{E}$ is a closed subset of $\mathcal{G}_u$, this is closed. If $g\neq 1$, then $\sigma^{-1}(g)$ has a maximal element $s_g$, and $[s,\phi]=[s_g,\phi]$ for each $[s,\phi]\in\mathcal{G}_u$ with $\sigma(s)=g$, so
\[
\{(d(\gamma),r(\gamma)):\gamma\in X,\rho(\gamma)=g\}=\{(d(\gamma),r(\gamma)):\gamma\in X\cap\Theta(s,D_{s_g^\ast s_g})\}
\]
Since $d$ and $r$ are homeomorphisms of $\Theta(s_g, D_{s_g^\ast s_g})$ onto closed subsets of $\widehat{E}$, the assertion follows.

Finally, we prove transversity. Since $G$ is discrete it is sufficient to show that the set
\[
\{d(\gamma):\gamma\in\mathcal{G}_u,\rho(\gamma)=g\}=\bigcup_{s\in\sigma^{-1}(g)}D_{s^\ast s}
\]
is open for each $g\in\sigma(S^\times)$, but this is obviously true.
\end{proof}

\begin{remark}
It is shown in \cite{milan_steinberg11} that Morita equivalence with the enveloping action holds for a slightly larger class of inverse semigroups than the one we have considered here, but this restriction is needed in some of the proofs of the next section.
\end{remark}

\section{Proof of the main theorem}
Recall that a locally compact Hausdorff space is totally disconnected if and only if it has a basis of compact open subsets. For a totally disconnected space $X$, let $\mathcal{U}_c(X)$ be the family (actually a Boolean algebra) of compact open subsets of $X$. In \cite{cuntz_echterhoff_li12}, a subfamily $\mathcal{V}\subset\mathcal{U}_c(X)$ is said to be a \emph{generating family} for $\mathcal{U}_c(X)$ if $\mathcal{U}_c(X)$ is the smallest family of compact open sets that contains $\mathcal{V}$ and is closed under finite intersections, finite unions and taking difference sets (i.e. $\mathcal{V}$ generates $\mathcal{U}_c(X)$ as a Boolean algebra). The family $\mathcal{V}$ is said to be \emph{independent} if for any $U,U_1,\ldots,U_n\in\mathcal{V}$, $U=\bigcup_{i=1}^n U_n$ implies that $U=U_j$ for some $1\leq j\leq n$. If $\mathcal{V}$ is an independent generating family for $\mathcal{U}_c(X)$ and $\mathcal{V}\cup\{\varnothing\}$ is closed under finite intersections, then $\mathcal{V}$ is said to be a \emph{regular basis} for $X$. For notational convenience we will require that $\varnothing\notin\mathcal{V}$. The following theorem is a part of the statement of \cite[Corollary 3.14]{cuntz_echterhoff_li12}.

\begin{theorem}[\cite{cuntz_echterhoff_li12}]\label{thm:CELktheory}
Let $X$ be a separable totally disconnected locally compact Hausdorff space with an action $\alpha$ of a group $G$ and suppose that $X$ has a $G$-invariant regular basis $\mathcal{V}$. Suppose also that $G$ satisfies the Baum-Connes conjecture with coefficients in $C_0(X)$ and $C_0(\mathcal{V})$. Then
\[
K_\ast(C_0(X)\rtimes_{\alpha,r} G)\simeq\bigoplus_{[v]\in\mathcal{V}\setminus G}K_\ast(C^\ast_r(G_v))
\]
where $\mathcal{V}\setminus G$ is the set of orbit classes in $\mathcal{V}$ and $G_v$ is the stabilizer group of $v\in\mathcal{V}$.
\end{theorem}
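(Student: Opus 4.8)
The plan is to deduce the formula from the two Baum-Connes hypotheses by first rewriting the right-hand side as the $K$-theory of a second crossed product and then comparing the two crossed products at the level of topological $K$-theory. Throughout, $\mathcal{V}$ is given the discrete topology, so that $C_0(\mathcal{V})$ is a $G$-$C^\ast$-algebra under $\alpha$.

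First I would identify the target group as $K_\ast(C_0(\mathcal{V})\rtimes_{r}G)$. Decomposing $\mathcal{V}$ into $G$-orbits gives a $G$-invariant $c_0$-direct sum $C_0(\mathcal{V})=\bigoplus_{[v]}C_0(Gv)$, and since both the reduced crossed product and $K$-theory commute with such direct sums, this reduces the computation to a single orbit $Gv\cong G/G_v$. For each orbit the standard imprimitivity isomorphism $C_0(G/G_v)\rtimes_{r}G\cong\mathcal{K}(\ell^2(G/G_v))\otimes C^\ast_r(G_v)$ gives $K_\ast(C_0(G/G_v)\rtimes_r G)\cong K_\ast(C^\ast_r(G_v))$, so that $K_\ast(C_0(\mathcal{V})\rtimes_r G)\cong\bigoplus_{[v]}K_\ast(C^\ast_r(G_v))$. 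Thus it suffices to prove the single isomorphism $K_\ast(C_0(X)\rtimes_r G)\cong K_\ast(C_0(\mathcal{V})\rtimes_r G)$.

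To compare the two crossed products I would pass to the left-hand side of the Baum-Connes assembly map. The hypotheses provide commuting squares in which the vertical assembly maps $\mu_{C_0(X)}$ and $\mu_{C_0(\mathcal{V})}$ are isomorphisms, so it is enough to produce a natural transformation $\eta$ between $C_0(\mathcal{V})$ and $C_0(X)$ (a class in $KK^G$, or a zig-zag of equivariant maps) and to show that the induced map on the topological side $K^{top}_\ast(G;-)=\varinjlim_{Y}KK^G_\ast(C_0(Y),-)$, the limit over $G$-compact subsets $Y$ of the classifying space for proper actions, is an isomorphism; naturality of $\mu$ then transports this isomorphism to the reduced crossed products. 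This is exactly why Baum-Connes is assumed for \emph{both} coefficient algebras. The geometric input is that, because $\mathcal{V}$ is an \emph{independent} generating family, the characteristic functions $\{1_v:v\in\mathcal{V}\}$ form a free $\mathbb{Z}$-basis of the group $C_c(X,\mathbb{Z})$ of compactly supported integer-valued functions, giving a $G$-equivariant identification $C_c(X,\mathbb{Z})\cong\mathbb{Z}\mathcal{V}=\bigoplus_{[v]}\mathbb{Z}[G/G_v]$; this is the module-level invariant that $K^{top}_\ast(G;-)$ detects for a totally disconnected space.

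Concretely I would realise $\eta$ and the topological-side isomorphism by building $C_0(X)$ as an inductive limit of the subalgebras generated by $G$-invariant finite families of orbits of $\mathcal{V}$, adding one orbit at a time. Continuity of $K^{top}_\ast(G;-)$ and of the reduced crossed product under inductive limits reduces the problem to analysing the long exact sequence attached to a single new orbit, whose contribution I expect to be precisely $K_\ast(C^\ast_r(G_v))$. The \emph{main obstacle}, and the step where independence of $\mathcal{V}$ is indispensable, is to show that these six-term sequences degenerate: the boundary maps must vanish so that each orbit contributes its summand with no extension problem or cancellation, and the summands assemble into the claimed direct sum. I expect the cleanest route is to verify this vanishing on the computable topological side, where it follows from the freeness of $C_c(X,\mathbb{Z})$ on $\{1_v\}$, and then to invoke the two assembly isomorphisms together with the five lemma to conclude the same statement for $K_\ast(C_0(X)\rtimes_r G)$.
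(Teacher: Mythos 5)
The paper does not prove this theorem at all: it is imported verbatim as part of \cite[Corollary 3.14]{cuntz_echterhoff_li12}, so the only meaningful benchmark is Cuntz--Echterhoff--Li's own argument. Your global architecture does in fact mirror theirs: rewrite the right-hand side as $K_\ast(C_0(\mathcal{V})\rtimes_r G)$ by decomposing $C_0(\mathcal{V})$ over orbits and applying Green's imprimitivity theorem $C_0(G/G_v)\rtimes_r G\sim_{\mathrm{Morita}}C^\ast_r(G_v)$, then compare the two coefficient algebras through the naturality of the Baum--Connes assembly map, verifying the isomorphism on the topological side; this is indeed exactly why the conjecture is assumed for both $C_0(X)$ and $C_0(\mathcal{V})$. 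That first reduction is correct as you state it.

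However, the decisive steps are missing or would fail as written. First, you never actually construct the comparison class $\eta$, and the obvious candidate $\delta_v\mapsto 1_v$ is \emph{not} a $\ast$-homomorphism: distinct $v,w\in\mathcal{V}$ need not be disjoint, so $1_v1_w=1_{v\cap w}\neq 0$ while $\delta_v\delta_w=0$. The key device in \cite{cuntz_echterhoff_li12} is to restore orthogonality by mapping into $C_0(X)\otimes\mathcal{K}(\ell^2(\mathcal{V}))$ via $\delta_v\mapsto 1_v\otimes\theta_{\xi_v,\xi_v}$, which is a genuine $G$-equivariant homomorphism to which naturality of assembly can be applied; without some such construction there is nothing for naturality to transport. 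Second, your ``one orbit at a time'' filtration with six-term sequences does not run: the spans of characteristic functions over finitely many orbits are subalgebras at best, not ideals, so no six-term exact sequence attaches to adding an orbit, and a single orbit (or finite union of orbits) need not be closed under intersections, so these spans need not even be subalgebras. Third, and most seriously, the topological-side verification cannot be reduced to the freeness of $C_c(X,\mathbb{Z})$ as a $\mathbb{Z}$-module: the theorem allows $G$ to have torsion, and the going-down principle of Chabert--Echterhoff--Oyono-Oyono requires showing the map is a $K_\ast$-isomorphism after taking crossed products by \emph{every finite subgroup} $F\leq G$. There one must compute $K_0$ of $F$-crossed products of finite-dimensional spans of characteristic functions, where independence enters through a triangular (with respect to inclusion) change of basis to minimal idempotents and the answer is expressed in representation rings $R(F_v)$ of stabilizers, not merely free abelian groups. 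Your sketch stops short of this computation, which is the actual content of the Cuntz--Echterhoff--Li proof.
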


We will use Theorem \ref{thm:CELktheory} to prove Theorem \ref{thm:maintheorem}. The way we will do it is to apply Theorem \ref{thm:CELktheory} to the enveloping action $(\Omega,G,\tau)$ of $(\widehat{E},G,\beta)$. We will show that
\begin{equation}\label{eq:regularbasis}
\mathcal{V}=\{\tau_g(D_e):e\in E^\times,g\in G\}
\end{equation}
is a $G$-invariant regular basis for $\Omega$. It is shown in \cite{higson_kasparov01} that if $G$ is a-T-menable (i.e. it satisfies the Haagerup property), then it satisfies the strong Baum-Connes conjecture with arbitrary separable coefficients. The statement of Theorem \ref{thm:maintheorem} could have been made more general by not requiring that $G$ is a-T-menable, but only that it satisfies the Baum-Connes conjecture with coefficients in $C_0(\Omega)$ and $C_0(\mathcal{V})$. However, that would have made the statement more complicated by making explicit reference to the enveloping action instead of just utilizing it in the proof. The next proposition will allow us to translate statements about the partial action into statements about the enveloping action.

\begin{proposition}\label{prop:basistranslate}
Let $X\subset Y$ be locally compact Hausdorff spaces such that $X$ is totally disconnected and open in $Y$. Let $\alpha$ be an action of a discrete group $G$ on $Y$ such that $\bigcup_{g\in G}\alpha_g(X)=Y$. Let $\mathcal{J}$ be a collection of compact open subsets of $X$. Suppose $\alpha_g(p)\cap X\in\mathcal{J}\cup\{\varnothing\}$ for every $p\in\mathcal{J}$ and $g\in G$. Let $\mathcal{W}=\{\alpha_g(p):g\in G,p\in\mathcal{J}\}$. Then
\begin{enumerate}
 \item If $\mathcal{J}\cup\{\varnothing\}$ is closed under finite intersections, so is $\mathcal{W}\cup\{\varnothing\}$.
 \item If $\mathcal{J}$ is independent, so is $\mathcal{W}$.
 \item If $\mathcal{J}$ is a generating family for $\mathcal{U}_c(X)$, then $\mathcal{W}$ is a generating family for $\mathcal{U}_c(Y)$ and $Y$ is totally disconnected.
\end{enumerate}
So if $\mathcal{J}$ is a regular basis for $X$, then $\mathcal{W}$ is a $G$-invariant regular basis for $Y$.
\end{proposition}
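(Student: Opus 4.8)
The plan is to prove the three numbered assertions separately and then read off the final statement, exploiting throughout that each $\alpha_g$ is a homeomorphism of $Y$ commuting with the Boolean operations of intersection, union and difference. First I would record two preliminaries. Since $X$ is open in $Y$, every $p\in\mathcal{J}$ is compact open in $Y$, hence so is each $\alpha_g(p)$, so $\mathcal{W}\subset\mathcal{U}_c(Y)$; moreover $\mathcal{W}$ is manifestly $G$-invariant since $\alpha_h(\alpha_g(p))=\alpha_{hg}(p)$. Second, for $p,q\in\mathcal{J}$ and $g,h\in G$, using $p\subset X$ one gets the identity $\alpha_g(p)\cap\alpha_h(q)=\alpha_g\big(p\cap(\alpha_{g^{-1}h}(q)\cap X)\big)$, which is the computation driving parts (i) and (ii).

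For part (i), I would apply this identity together with the standing hypothesis $\alpha_{g^{-1}h}(q)\cap X\in\mathcal{J}\cup\{\varnothing\}$; then $p\cap(\alpha_{g^{-1}h}(q)\cap X)$ is a finite intersection of members of $\mathcal{J}\cup\{\varnothing\}$, hence lies in $\mathcal{J}\cup\{\varnothing\}$, and applying $\alpha_g$ returns an element of $\mathcal{W}\cup\{\varnothing\}$; pairwise closure gives closure under all finite intersections. For part (ii), suppose $\alpha_g(p)=\bigcup_{i=1}^n\alpha_{g_i}(p_i)$ in $\mathcal{W}$. Translating by $\alpha_{g^{-1}}$ gives $p=\bigcup_i\alpha_{g^{-1}g_i}(p_i)$; since the union equals $p\subset X$, each summand is contained in $X$, so each summand $\alpha_{g^{-1}g_i}(p_i)=\alpha_{g^{-1}g_i}(p_i)\cap X$ lies in $\mathcal{J}\cup\{\varnothing\}$ by hypothesis. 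Discarding empty summands, independence of $\mathcal{J}$ forces $p=\alpha_{g^{-1}g_j}(p_j)$ for some $j$, and applying $\alpha_g$ yields $\alpha_g(p)=\alpha_{g_j}(p_j)$, as required.

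Part (iii) is where the real work lies. Total disconnectedness of $Y$ is quick: each $\alpha_g(X)$ is open in $Y$ and homeomorphic to the totally disconnected space $X$, these sets cover $Y$, and a compact open basis of each piece serves as a compact open basis at each point of $Y$. The harder point is that $\mathcal{W}$ generates $\mathcal{U}_c(Y)$. Given $K\in\mathcal{U}_c(Y)$, I would cover it by the opens $\alpha_g(X)$; the main obstacle is that $K\cap\alpha_g(X)$ need not be compact, so one cannot simply cut $K$ along the translates. Instead I would use the compact open basis of $Y$ to choose, for each $y\in K$, a compact open neighbourhood contained in a single $\alpha_g(X)$, extract a finite subcover by compactness of $K$, and intersect each cover element with $K$ to obtain compact open $W_1,\dots,W_N$ with $K=\bigcup_i W_i$ and each $W_i\subset\alpha_{g_{j_i}}(X)$. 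Then $\alpha_{g_{j_i}^{-1}}(W_i)$ is a compact open subset of $X$, hence built from $\mathcal{J}$ by finitely many Boolean operations; applying the homeomorphism $\alpha_{g_{j_i}}$ expresses $W_i$ through the corresponding operations on $\{\alpha_{g_{j_i}}(p):p\in\mathcal{J}\}\subset\mathcal{W}$, so $W_i$, and hence the finite union $K$, lies in the Boolean algebra generated by $\mathcal{W}$.

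Finally, the concluding statement follows by assembling the pieces: if $\mathcal{J}$ is a regular basis, then $\mathcal{W}\cup\{\varnothing\}$ is closed under finite intersections by (i), $\mathcal{W}$ is independent by (ii), and $\mathcal{W}$ generates $\mathcal{U}_c(Y)$ with $Y$ totally disconnected by (iii); together with the $G$-invariance noted at the outset and the fact that $\varnothing\notin\mathcal{W}$ (each $\alpha_g$ being a bijection and the members of $\mathcal{J}$ nonempty), this says exactly that $\mathcal{W}$ is a $G$-invariant regular basis for $Y$.
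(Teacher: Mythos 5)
Your proposal is correct, and for parts (i) and (ii) it coincides with the paper's argument essentially word for word: the same identity $\alpha_g(p)\cap\alpha_h(q)=\alpha_g\bigl(p\cap(\alpha_{g^{-1}h}(q)\cap X)\bigr)$ drives (i), and the same translation-by-$\alpha_{g^{-1}}$ trick, with the observation that each summand lies inside $p\subset X$ and hence in $\mathcal{J}$, drives (ii). In part (iii) you genuinely depart from the paper, and to your advantage. The paper writes a compact open $U\subset Y$ as $\bigcup_{g\in F}\alpha_g(\alpha_{g^{-1}}(U)\cap X)$ for a finite $F\subset G$ and then simply asserts that each $\alpha_{g^{-1}}(U)\cap X$ is \emph{compact} open in $X$. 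As you observe, this is not automatic: the intersection of a compact set with an open set need not be compact, and without the standing hypothesis $\alpha_g(p)\cap X\in\mathcal{J}\cup\{\varnothing\}$ one can construct actions satisfying the purely topological hypotheses for which it fails (e.g.\ a homeomorphism of order two swapping two convergent sequences, with $X$ the complement of one limit point). The paper's assertion can be rescued, but only by using that hypothesis: intersecting with $X$ commutes with finite unions, intersections and differences, so $\alpha_g(V)\cap X\in\mathcal{U}_c(X)$ for every $V\in\mathcal{U}_c(X)$, and then a covering argument very much like yours gives the compactness. Your route sidesteps the issue entirely: you first prove $Y$ totally disconnected directly from the cover of $Y$ by the open homeomorphic copies $\alpha_g(X)$ (legitimate, since this does not involve $\mathcal{J}$ at all, whereas the paper only deduces total disconnectedness at the end), and then refine the cover of $K$ to finitely many compact open pieces each contained in a single translate $\alpha_g(X)$, pull back by $\alpha_g^{-1}$, and use that homeomorphisms commute with the Boolean operations. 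The two arguments prove the same statement, but yours makes explicit, and effectively repairs, the one step the paper leaves unjustified, at the modest cost of establishing total disconnectedness of $Y$ before, rather than after, the generation statement.
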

\begin{proof}
(i) It is sufficient to show that the intersection of two elements in $\mathcal{W}$ is in $\mathcal{W}\cup\{\varnothing\}$. Let $p,q\in\mathcal{J}\cup\{\varnothing\}$ and $g,h\in G$. Let $r=\alpha_{g^{-1}h}(q)\cap X\in\mathcal{J}\cup\{\varnothing\}$. Then
\begin{align*}
\alpha_g(p)\cap\alpha_h(q)&=\alpha_g(p\cap\alpha_{g^{-1}h}(q))=\alpha_g(p\cap(\alpha_{g^{-1}h}(q)\cap X))\\&=\alpha_g(p\cap r).
\end{align*}
Since $\mathcal{J}\cup\{\varnothing\}$ is closed under finite intersections, $p\cap r\in\mathcal{J}\cup\{\varnothing\}$, so $\alpha_g(p\cap r)\in\mathcal{W}\cup\{\varnothing\}$.

(ii) Let $p,p_1,\ldots,p_n\in\mathcal{J}$ and let $g,g_1,\ldots,g_n\in G$. Suppose
\[
\bigcup_{i=1}^n\alpha_{g_i}(p_i)=\alpha_g(p).
\]
Then
\[
\bigcup_{i=1}^n\alpha_{g^{-1}g_i}(p_i)=p.
\]
For each $i$, $\alpha_{g^{-1}g_i}(p_i)\subset p\subset X$, so $\alpha_{g^{-1}g_i}(p_i)\cap X=\alpha_{g^{-1}g_i}(p_i)\in\mathcal{J}$. It follows by the independence of $\mathcal{J}$ that $p=\alpha_{g^{-1}g_j}(p_j)$ for at least one $1\leq j\leq n$, so $\alpha_{g_j}(p_j)=\alpha_g(p)$.

(iii) First, any element in $\mathcal{W}$ is compact open in $Y$ since $X$ is open in $Y$. Let $U\subset Y$ be compact open. We have
\[
 U=U\cap\bigcup_{g\in G}\alpha_g(X)=\bigcup_{g\in G}\alpha_g(\alpha_{g^{-1}}(U)\cap X).
\]
and $\alpha_{g^{-1}}(U)\cap X$ is open in $Y$ for each $g\in G$. By the compactness of $U$, there is some finite subset $F\subset G$ such that
\[
U=\bigcup_{g\in F}\alpha_g(\alpha_{g^{-1}}(U)\cap X).
\]
For each $g\in G$, $\alpha_{g^{-1}}(U)\cap X$ is compact open in $X$. So since $\mathcal{J}$ is a generating family for $\mathcal{U}_c(X)$, it follows that $\mathcal{W}$ is a generating family for $\mathcal{U}_c(Y)$. As $X$ is totally disconnected, any open set in $X$ is a union of elements in $\mathcal{U}_c(X)$. It follows by a similar argument that any open set in $Y$ is a union of elements in $\mathcal{U}_c(Y)$, so $Y$ is totally disconnected.
\end{proof}

\begin{lemma}\label{lem:gaction}
For any $e\in E$ and $g\in \sigma(S^\times)$ we have $\tau_g(D_e)\cap\widehat{E}=D_{s_ges_g^\ast}$. Moreover $\tau_g(D_e)\subset\widehat{E}$ if and only if $e\leq s_g^\ast s_g$.
\end{lemma}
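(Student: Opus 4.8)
The plan is to compute both sides directly inside the enveloping space $\Omega$, using the explicit description of the equivalence relation $\sim$ on $G\times\widehat{E}$ together with the identification of $\widehat{E}$ with $\iota(\widehat{E})=\{[1,\psi]:\psi\in\widehat{E}\}$. Since $D_e$ sits inside $\widehat{E}$ as $\{[1,\phi]:\phi\in D_e\}$, one has $\tau_g(D_e)=\{[g,\phi]:\phi\in D_e\}$. The first thing I would establish is a clean criterion for membership in $\widehat{E}$: by the definition of $\sim$, the class $[g,\phi]$ equals some $[1,\psi]$ exactly when $\phi\in\dom(\beta_g)=D_{s_g^\ast s_g}$, and in that case $\psi=\beta_g(\phi)=\theta_{s_g}(\phi)$. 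Thus $[g,\phi]\in\widehat{E}$ precisely when $\phi\in D_{s_g^\ast s_g}$, and its image in $\widehat{E}$ is then $\theta_{s_g}(\phi)$.

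Granting this, the first assertion reduces to a cylinder-set calculation. Since a character is multiplicative, $D_e\cap D_{s_g^\ast s_g}=D_{es_g^\ast s_g}$, so the criterion gives $\tau_g(D_e)\cap\widehat{E}=\theta_{s_g}(D_{es_g^\ast s_g})$. The key step is the identity $\theta_s(D_f)=D_{sfs^\ast}$, valid for any idempotent $f\leq s^\ast s$; this I would prove from $\theta_s(\phi)(h)=\phi(s^\ast hs)$ and the commutativity of idempotents, which give $s^\ast sfs^\ast s=f$ and hence $\theta_s(\phi)(sfs^\ast)=\phi(f)$, so the two cylinder sets coincide. Applying this with $s=s_g$ and $f=es_g^\ast s_g$, and simplifying $s_g(es_g^\ast s_g)s_g^\ast=s_ges_g^\ast$ by means of $s_g^\ast s_gs_g^\ast=s_g^\ast$, yields exactly $D_{s_ges_g^\ast}$.

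For the second assertion, the criterion shows that $\tau_g(D_e)\subset\widehat{E}$ if and only if every $\phi\in D_e$ lies in $D_{s_g^\ast s_g}$, i.e.\ $D_e\subset D_{s_g^\ast s_g}$. I would finish by recording the elementary equivalence, for idempotents, that $D_e\subset D_f$ iff $e\leq f$: the forward implication is immediate from $e=ef$, and the reverse follows by evaluating at the minimal character $\phi_e$ of \eqref{eq:minimalcharacter}, since $\phi_e\in D_e\subset D_f$ forces $\phi_e(f)=1$, that is $e\leq f$. Taking $f=s_g^\ast s_g$ gives the claim. I expect the only real friction to be the inverse-semigroup bookkeeping in the identity $\theta_s(D_f)=D_{sfs^\ast}$ and the ensuing simplification; one should also keep in mind the degenerate case $s_ges_g^\ast=0$, in which both sides collapse to the empty set consistently.
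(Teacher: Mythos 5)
Your proof is correct and follows essentially the same route as the paper's: establish that $[g,\phi]\in\widehat{E}$ iff $\phi\in D_{s_g^\ast s_g}$ (with image $\theta_{s_g}(\phi)$), deduce $\tau_g(D_e)\cap\widehat{E}=\theta_{s_g}(D_e\cap D_{s_g^\ast s_g})=D_{s_ges_g^\ast}$, and read off the second assertion from $D_e\subset D_{s_g^\ast s_g}\Leftrightarrow e\leq s_g^\ast s_g$. The only difference is that you verify the identity $\theta_s(D_e\cap D_{s^\ast s})=D_{ses^\ast}$ directly (and justify $D_e\subset D_f\Leftrightarrow e\leq f$ via the character $\phi_e$), where the paper simply cites Equation (10.3.1) of Exel; your computations there are correct.
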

\begin{proof}
For any $g\in \sigma(S^\times)$ and $\phi\in\widehat{E}$, we have that $[g,\phi]\in\widehat{E}$ if and only if $[g,\phi]=[1,\psi]$ for some $\psi\in\widehat{E}$ if and only if $\phi\in D_{s_g^\ast s_g}$ and $\theta_{s_g}(\phi)=\psi$. Then $\tau_g(D_e)\cap\widehat{E}=\theta_{s_g}(D_e\cap D_{s_g^\ast s_g})$. By \cite[Equation (10.3.1)]{exel08} this set is equal to $D_{s_ges_g^\ast}$. If $e\leq s_g^\ast s_g$, then $D_e\subset D_{s_g^\ast s_g}$, so $\tau_g(D_e)=\theta_{s_g}(D_e)\subset\widehat{E}$. Suppose $\tau_g(D_e)\subset\widehat{E}$. Then $\phi\in D_{s_g^\ast s_g}$ for each $\phi\in D_e$, so $D_e\subset D_{s_g^\ast s_g}$ and $e\leq s_g^\ast s_g$.
\end{proof}

Given any $e\in E^\times$ and finite subset $Z\subset eE=\{f\in E:f\leq e\}$, let
\[
 D_{(e,Z)}=D_e\setminus\left(\bigcup_{z\in Z}D_z\right).
\]
Note that since $D_x$ is compact open for each $x\in E$, so is $D_{(e,Z)}$. Recall from \cite[Section 4.3]{paterson99} that the family
\[
\mathcal{T}=\{D_{(e,Z)}:e\in E^\times, Z\subset eE\mbox{ is a finite subset}\}
\]
is a basis for the topology on $\widehat{E}$. We now see that any compact open subset of $\widehat{E}$ can be written as a finite union of elements in $\mathcal{T}$: Indeed, since $\mathcal{T}$ is a basis for the topology on $\widehat{E}$, any open set $U$ in in $\widehat{E}$ can be written as a union of elements in $\mathcal{T}$. These elements form a cover for $U$, so if $U$ is compact, one can rewrite $U$ as a finite union of such elements.

\begin{proposition}\label{prop:visbasis}
The family $\mathcal{V}$ defined in equation \eqref{eq:regularbasis} is a $G$-invariant regular basis for $\Omega$.
\end{proposition}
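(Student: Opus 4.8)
The plan is to deduce the statement from the abstract transfer result Proposition~\ref{prop:basistranslate}, applied to the inclusion $\widehat{E}\subset\Omega$. Concretely, I would set $Y=\Omega$, $X=\widehat{E}$, take $\alpha$ to be the enveloping action $\tau$ of the discrete group $G$, and let $\mathcal{J}=\{D_e:e\in E^\times\}$; then $\mathcal{W}=\{\tau_g(D_e):g\in G,\,e\in E^\times\}$ is precisely the family $\mathcal{V}$ of \eqref{eq:regularbasis}. Before invoking Proposition~\ref{prop:basistranslate} I would record that its ambient hypotheses hold: $\Omega$ is locally compact Hausdorff and $\mathcal{G}_u$ is Morita equivalent to $G\ltimes\Omega$ by \cite[Theorem~1.8]{khoshkam_skandalis02} (closedness of $\rho$ being exactly what secures the Hausdorff property); $\widehat{E}$ is open in $\Omega$ and carries the subspace topology by \cite[Theorem~1.1]{abadie03}; $G$ is discrete; and $\bigcup_{g\in G}\tau_g(\widehat{E})=\Omega$ since the orbit of $\widehat{E}$ exhausts $\Omega$. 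Finally $\widehat{E}$ is totally disconnected, having the basis $\mathcal{T}$ of compact open sets.

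The first substantive step is to check that $\mathcal{J}$ is a regular basis for $\widehat{E}$. Closure of $\mathcal{J}\cup\{\varnothing\}$ under finite intersections follows from $D_e\cap D_f=D_{ef}$, which lies in $\mathcal{J}$ when $ef\neq0$ and equals $\varnothing=D_0$ otherwise. For the generating property I would use that every compact open subset of $\widehat{E}$ is a finite union of basic sets $D_{(e,Z)}=D_e\setminus\bigcup_{z\in Z}D_z$ (as recalled from \cite[Section~4.3]{paterson99} just above), and each such set is built from members of $\mathcal{J}$ by a difference and finite unions; hence the Boolean algebra generated by $\mathcal{J}$ is all of $\mathcal{U}_c(\widehat{E})$. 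For independence, suppose $D_e=\bigcup_{i=1}^n D_{e_i}$ with $e,e_i\in E^\times$. Each $D_{e_i}\subseteq D_e$ forces $e_i\leq e$, and testing the minimal character $\phi_e$ of \eqref{eq:minimalcharacter} against the union yields $\phi_e\in D_{e_j}$, i.e.\ $e\leq e_j$, for some $j$; antisymmetry of the natural order then gives $e=e_j$ and $D_e=D_{e_j}$. Since $\phi_e\in D_e$ shows $\varnothing\notin\mathcal{J}$, this establishes that $\mathcal{J}$ is a regular basis.

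The key hypothesis of Proposition~\ref{prop:basistranslate} --- and the point where the standing assumptions on $S$ really enter --- is that $\tau_g(D_e)\cap\widehat{E}\in\mathcal{J}\cup\{\varnothing\}$ for every $e\in E^\times$ and $g\in G$. For $g\in\sigma(S^\times)$ this is exactly Lemma~\ref{lem:gaction}, which identifies the intersection with $D_{s_g e s_g^\ast}$, a member of $\mathcal{J}$ when $s_g e s_g^\ast\neq0$ and empty otherwise; here the $0$-$F$-inverse condition together with injectivity of $\sigma$ on maximal elements is what produces the single element $s_g$ governing $\beta_g$. For $g\notin\sigma(S^\times)$ there is no $s$ with $\sigma(s)=g$, so $\beta_g$ is the empty partial homeomorphism and $\tau_g(D_e)\cap\widehat{E}=\varnothing$. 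With all hypotheses verified, Proposition~\ref{prop:basistranslate} yields that $\mathcal{W}=\mathcal{V}$ is a $G$-invariant regular basis for $\Omega$. I expect the main obstacle to be not any single calculation but marshalling the hypotheses correctly: in particular ensuring the translation condition holds uniformly in $g$ (the dichotomy $g\in\sigma(S^\times)$ versus $g\notin\sigma(S^\times)$) and confirming the locally compact Hausdorff structure of $\Omega$ that Proposition~\ref{prop:basistranslate} presupposes.
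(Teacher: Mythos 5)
Your proof is correct and takes essentially the same route as the paper: you show that $\mathcal{J}=\{D_e : e\in E^\times\}$ is a regular basis for $\widehat{E}$ (intersections via $D_e\cap D_f=D_{ef}$, independence via the characters $\phi_e$ of \eqref{eq:minimalcharacter}, generation via the basis $\mathcal{T}$) and then transfer to $\Omega$ using Proposition~\ref{prop:basistranslate} together with Lemma~\ref{lem:gaction}. Your explicit handling of the case $g\notin\sigma(S^\times)$, where $\beta_g$ is empty and hence $\tau_g(D_e)\cap\widehat{E}=\varnothing$, is a detail the paper leaves implicit.
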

\begin{proof}
If we can show that the family $\mathcal{J}=\{D_e:e\in E^\times\}$ is a regular basis for $\widehat{E}$, the result will follow from Proposition \ref{prop:basistranslate} and Lemma \ref{lem:gaction}.

We know that $\mathcal{J}\cup\{\varnothing\}$ is closed under finite intersections, since for any $e,f\in E$, $D_e\cap D_f=D_{ef}$ (and $D_0=\varnothing$). To show that $\mathcal{J}$ is independent, suppose that $e,e_1,\ldots,e_n\in E^\times$ satisfy $\bigcup_{i=1}^n D_{e_i}=D_e$. For each $i$, $D_{e_i}\subset D_e$, so $e_i\leq e$. Moreover, $\phi_e\in D_{e_j}$ for at least one $1\leq j\leq n$. Then $\phi_e(e_j)=1$, so $e\leq e_j$ by the definition of $\phi_e$. Thus $e=e_j$ and $D_e=D_{e_j}$. Finally, we need that $\mathcal{J}$ is a generating set for $\mathcal{U}_c(\widehat{E})$. This follows from the fact that any compact open subset of $\widehat{E}$ is a finite union of elements in $\mathcal{T}$.
\end{proof}

Recall from Theorem \ref{thm:maintheorem} that we defined a relation $\approx$ on $E^\times$ by $e\approx f$ if there is some $s\in S^\times$ such that $s^\ast s=e$ and $ss^\ast=f$.

\begin{lemma}\label{lem:orbitclass}
Let $e,f\in E^\times$. Then $D_e$ and $D_f$ are in the same $G$-orbit in $\mathcal{V}$ if and only if $e\approx f$.
\end{lemma}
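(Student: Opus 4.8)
The plan is to reduce everything to Lemma~\ref{lem:gaction}, which already computes $\tau_g(D_e)\cap\widehat E$ and records exactly when $\tau_g(D_e)$ is contained in $\widehat E$. Since both $D_e$ and $D_f$ lie in $\widehat E$, saying that they share a $G$-orbit in $\mathcal V$ means precisely that there is some $g\in G$ with $\tau_g(D_e)=D_f\subseteq\widehat E$. So the whole lemma amounts to translating the condition ``$\tau_g(D_e)=D_f$ for some $g$'' into ``there is $s\in S^\times$ with $s^\ast s=e$ and $ss^\ast=f$'', and the translation is mediated by the formula $\tau_g(D_e)=D_{s_g e s_g^\ast}$.

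For the direction $e\approx f\Rightarrow$ same orbit, I would take $s\in S^\times$ with $s^\ast s=e$, $ss^\ast=f$ and put $g=\sigma(s)$. The key point is that $s\leq s_g$: the element $s$ lies beneath a unique maximal element $m$, and since $s\leq m$ forces $\sigma(m)=\sigma(s)=g$, injectivity of $\sigma$ on $M(S^1)$ gives $m=s_g$. From $s\leq s_g$ I get $s=s_g s^\ast s=s_g e$, whence $e=e s_g^\ast s_g$, i.e. $e\leq s_g^\ast s_g$, and $ss^\ast=s_g e e^\ast s_g^\ast=s_g e s_g^\ast=f$. Lemma~\ref{lem:gaction} then yields $\tau_g(D_e)=D_{s_g e s_g^\ast}=D_f$, as wanted.

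For the converse, suppose $\tau_g(D_e)=D_f$ for some $g\in G$. First I must verify $g\in\sigma(S^\times)$ so that Lemma~\ref{lem:gaction} is applicable: for any $\phi$ one has $[g,\phi]\in\widehat E$ iff $\phi\in\dom(\beta_g)$, and since $\beta_g=\bigcup_{\sigma(s)=g}\theta_s$ has empty domain whenever $g\notin\sigma(S^\times)$, the nonempty set $\tau_g(D_e)\cap\widehat E=D_f$ forces $g\in\sigma(S^\times)$. Now Lemma~\ref{lem:gaction} applies: as $\tau_g(D_e)\subseteq\widehat E$, we obtain $e\leq s_g^\ast s_g$ and $D_f=D_{s_g e s_g^\ast}$, so $f=s_g e s_g^\ast$ (nonzero, since $D_f\neq\varnothing$, using that $D_x=D_y\Leftrightarrow x=y$ for nonzero idempotents). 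Setting $s=s_g e\in S^\times$, a short computation with the commutativity of $E$, $e^\ast=e$, and $e=e s_g^\ast s_g$ gives $s^\ast s=e s_g^\ast s_g e=e$ and $ss^\ast=s_g e s_g^\ast=f$, so $e\approx f$.

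The two genuinely load-bearing steps are the identification $m=s_g$ in the forward direction (via uniqueness of maximal elements together with injectivity of $\sigma$) and, in the converse, ruling out $g\notin\sigma(S^\times)$ before invoking Lemma~\ref{lem:gaction}; I expect the latter to be the main obstacle, since it requires unwinding the definition of the enveloping action to see that $\tau_g(D_e)$ can meet $\widehat E$ only for $g$ in the range of $\sigma$. Everything else is routine idempotent arithmetic. One minor caveat to dispatch is that $s_g$ may be the adjoined identity of $S^1$ when $g=1_G$; but then $s_g e=e$ already lies in $S$, so the witness $s$ is a genuine element of $S^\times$ and the argument is unaffected.
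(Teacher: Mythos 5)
Your proof is correct and takes essentially the same route as the paper's: both directions run through Lemma \ref{lem:gaction}, with the witness $s=s_ge$ in one direction and $g=\sigma(s)$, $s\leq s_g$ in the other. The only difference is that you spell out details the paper leaves implicit (why $g$ must lie in $\sigma(S^\times)$ before Lemma \ref{lem:gaction} applies, and why $D_f=D_{s_ges_g^\ast}$ forces $f=s_ges_g^\ast$), which tightens rather than changes the argument.
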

\begin{proof}
Suppose $g\in G$ satisfies $\tau_g(D_e)=D_f$. Then by Lemma \ref{lem:gaction} $e\leq s_g^\ast s_g$ and $s_ges_g^\ast=f$. Let $s=s_ge$. Then $s^\ast s=es_g^\ast s_g e=e$ since $e\leq s_g^\ast s_g$, and $ss^\ast = s_g ees_g^\ast=s_ges_g^\ast =f$.

Suppose to the contrary that there is some $s\in S$ such that $s^\ast s=e$ and $ss^\ast=f$. Let $g=\sigma(s)$. Then $s\leq s_g$. It follows that $s_ge=s_gs^\ast s=s$, so $e\leq s_g^\ast s_g$, and $f=ss^\ast=s_ges_g^\ast$. So $\tau_g(D_e)=D_f$.
\end{proof}

Given $e\in E^\times$, let
\begin{align*}
G_e&=\{g\in\sigma(S^\times):e\leq s_g^\ast s_g,\, s_ges_g^\ast=e\}\\
&=\{\sigma(s):s\in M(S^1),e\leq s^\ast s, ses^\ast=e\}\\
&=\{\sigma(s):s\in S^\times, e\leq s^\ast s, ses^\ast=e\}\\
&=\{\sigma(s):s\in S^\times, ss^\ast=s^\ast s=e\}.
\end{align*}
We leave to the reader to prove that these definitions are equivalent.

\begin{lemma}\label{lem:stabgroup}
Let $e\in E^\times$. Then $G_e$ is the stabilizer group of $D_e\in\mathcal{V}$.
\end{lemma}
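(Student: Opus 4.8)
The plan is to unwind the definition of the stabilizer group and reduce everything to the two statements already packaged in Lemma \ref{lem:gaction}. By definition the stabilizer of $D_e$ is $H_e=\{g\in G:\tau_g(D_e)=D_e\}$, and I want to identify this with $G_e$ in its first form $\{g\in\sigma(S^\times):e\leq s_g^\ast s_g,\ s_ges_g^\ast=e\}$. Since that form already builds in $g\in\sigma(S^\times)$, the two inclusions will go through Lemma \ref{lem:gaction}, with one extra bookkeeping point in the reverse direction.

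For the inclusion $G_e\subseteq H_e$, I would simply feed a $g\in G_e$ into Lemma \ref{lem:gaction}: since $e\leq s_g^\ast s_g$ the lemma gives $\tau_g(D_e)\subseteq\widehat{E}$, whence $\tau_g(D_e)=\tau_g(D_e)\cap\widehat{E}=D_{s_ges_g^\ast}=D_e$, using $s_ges_g^\ast=e$. So $g$ stabilizes $D_e$.

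For the reverse inclusion I would take $g$ with $\tau_g(D_e)=D_e$ and run the same computation backwards. The one point that needs care — and the step I expect to be the main (if minor) obstacle — is that Lemma \ref{lem:gaction} is only stated for $g\in\sigma(S^\times)$, whereas a priori the stabilizer is taken over all of $G$. So before applying the lemma I must rule out $g\notin\sigma(S^\times)$: picking any $\phi\in D_e$, the equality $\tau_g([1,\phi])=[g,\phi]\in D_e\subseteq\widehat{E}$ forces $[g,\phi]=[1,\psi]$ for some $\psi$, which by the definition of $\sim$ means $\phi\in\dom\beta_g$; since $\beta_g=\bigcup_{\sigma(s)=g}\theta_s$ has nonempty domain only when $g\in\sigma(S^\times)$, this gives $g\in\sigma(S^\times)$.

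Once $g\in\sigma(S^\times)$ is secured, Lemma \ref{lem:gaction} applies directly: from $\tau_g(D_e)=D_e\subseteq\widehat{E}$ the lemma yields $e\leq s_g^\ast s_g$, and then $D_{s_ges_g^\ast}=\tau_g(D_e)\cap\widehat{E}=D_e$. To upgrade this to $s_ges_g^\ast=e$ I would use that $D_{(\cdot)}$ is injective on $E^\times$ (if $D_a=D_b$ then $\phi_a\in D_b$ and $\phi_b\in D_a$ force $a\leq b\leq a$), noting that $s_ges_g^\ast\neq 0$ because $D_{s_ges_g^\ast}=D_e\neq\varnothing$. This places $g$ in $G_e$ and, together with the first inclusion, finishes the proof.
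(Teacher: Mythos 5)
Your proof is correct and takes essentially the same route as the paper: both directions are read off directly from Lemma \ref{lem:gaction}, with the injectivity of $e\mapsto D_e$ on $E^\times$ (via the characters $\phi_e$) used to pass from $D_{s_ges_g^\ast}=D_e$ to $s_ges_g^\ast=e$. The paper's version is terser and tacitly assumes $g\in\sigma(S^\times)$, so your extra step ruling out $g\notin\sigma(S^\times)$ through the definition of the enveloping action merely makes explicit a detail the paper leaves to the reader.
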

\begin{proof}
Assume $\tau_g(D_e)=D_e$. Then $\tau_g(D_e)\subset\widehat{E}$, so by Lemma \ref{lem:gaction} $e\leq s_g^\ast s_g$, $D_e=D_{s_ges_g^\ast}$ and $e=s_ges_g^\ast$. The converse implication follows from the same lemma.
\end{proof}

We can now complete the proof of the main theorem. Since $G$ is a-T-menable, Theorem \ref{thm:CELktheory} together with Proposition \ref{prop:visbasis} gives us that
\[
 K_\ast(C^\ast_r(S))\simeq K_\ast(C_0(\Omega)\rtimes_r G)\simeq \bigoplus_{[v]\in\mathcal{V}\setminus G}K_\ast(C^\ast_r(G_v))
\]
For any $v\in\mathcal{V}$ we have that $v=\tau_g(D_e)$ for some $e\in E^\times$, so $[v]=[D_e]$. Thus it is sufficient to take a direct sum over $\{[D_e]:e\in E^\times\}$, which can be identified with $\widetilde{E}$ by Lemma \ref{lem:orbitclass}. The rest follows from Lemma \ref{lem:stabgroup}.

\section{Connections to submonoids of groups}
Given a set $X$, let $I(X)$ be the inverse monoid of all partial bijections on $X$. Here a partial bijection is a bijection $f:d(f)\to r(f)$ with $d(f),r(f)\subset X$, $f^\ast =f^{-1}$ and the product is given by composition wherever it makes sense. This product may result in the empty function, which is the $0$-element of $I(X)$.

We will now explain how the ``$0$-$F$-inverse'' condition for inverse semigroups is related to the Toeplitz condition for subsemigroups of groups as defined in \cite{li12,cuntz_echterhoff_li12}. Let $P$ be a submonoid of the (discrete) group $G$. For each $p\in P$, let $\lambda_p:P\to pP$ be the map given by $\lambda_p(q)=pq$. Then $\lambda_p\in I_P$ since $P$ is left cancellative. Let $I_P^\lambda$ be the inverse subsemigroup of $I_P$ generated by $\{\lambda_p:p\in P\}\cup\{0\}$. Since $P$ is a monoid,  $I_P^\lambda$ is also a monoid. $I_P^\lambda$ is called the \emph{left inverse hull} of $P$ \cite{meakin05}. Each idempotent $e\in E(I_P^\lambda)$ is the identity function on some subset $X\subset P$. Let $\mathcal{J}=\{d(e):e\in E(I_P^\lambda)\}$. Then it is easy to show that $E(I_P^\lambda)$ and $(\mathcal{J},\cap)$ are isomorphic as semilattices.

Let $\{\varepsilon_p:p\in P\}$ be the canonical basis for $\ell^2(P)$. For each $p\in P$, let $V_p\in B(\ell^2(P))$ be given by $V_p\varepsilon_q=\varepsilon_{pq}$. Define $C^\ast_r(P)$ to be the $C^\ast$-algebra generated by $\{V_p:p\in P\}$. It was shown in \cite[Theorem 3.2.14]{norling12} that if $\mathcal{J}$ is independent, there is an isomorphism $C^\ast_r(I_P^\lambda)\to C^\ast_r(P)$ given by $\Lambda(\lambda_p)\mapsto V_p$.

As shown in \cite[Proposition 3.2.11]{norling12}, one can define an idempotent pure morphism $\sigma:(I_P^\lambda)^\times\to G$ by $\sigma(\lambda_p)=p$. Moreover, any group $H$ and idempotent pure morphism $\sigma:I_P^\lambda\to H$ gives rise to an embedding $P\to H$ given by $p\mapsto\sigma(\lambda_p)$. Note that for any $f\in I_P^\lambda$ and $p\in d(f)$,
\begin{equation}\label{eq:gradingaction}
f(p)=\sigma(f)p
\end{equation}
Let $\{\varepsilon_g:g\in G\}$ be the canonical basis for $\ell^2(G)$. Let $U_g\in B(\ell^2(G))$ be given by $U_g\varepsilon_h=\varepsilon_{gh}$ (i.e. $g\mapsto U_g$ is the left regular representation of $G$). Let $E_P$ be the orthogonal projection of $\ell^2(G)$ onto the subspace $\ell^2(P)$. By \cite[Definition 4.1]{li12}, $P\subset G$ satisfies the \emph{Toeplitz condition} if for any $g\in G$ with $E_P U_g E_P\neq 0$ there exist $p_1,\ldots,p_n,q_1,\ldots,q_n\in P$ such that $E_P U_g E_P=V_{q_1}^\ast V_{p_1}\cdots V_{q_n}^\ast V_{p_n}$.

\begin{proposition}\label{prop:toeplitz}
Let $P$ be a submonoid of the group $G$. Then $P\subset G$ satisfies the Toeplitz condition if and only if $I_P^\lambda$ is $0$-$F$-inverse and $\sigma$ is injective on the set of maximal elements in $I_P^\lambda$.
\end{proposition}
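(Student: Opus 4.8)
The plan is to reinterpret the operator $E_P U_g E_P$ as (the operator induced by) an element of the left inverse hull. For $g\in G$ let $\mu_g$ be the partial bijection of $P$ acting as left multiplication by $g$ on the set $\{q\in P:gq\in P\}$; the operator it induces on $\ell^2(P)$ is exactly $E_P U_g E_P$, so $\mu_g\neq 0$ precisely when $E_P U_g E_P\neq 0$. By \eqref{eq:gradingaction} every nonzero $f\in I_P^\lambda$ with $\sigma(f)=g$ also acts as left multiplication by $g$ on $d(f)$, whence $d(f)\subseteq\{q:gq\in P\}=d(\mu_g)$ and $f$ is a restriction of $\mu_g$, i.e. $f\leq\mu_g$. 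Thus $\mu_g$ is the only possible maximal element lying over the $\sigma$-fibre of $g$, and the proposition reduces to the single statement
\[
P\subset G\text{ satisfies the Toeplitz condition}\iff \mu_g\in I_P^\lambda\text{ whenever }E_P U_g E_P\neq 0.
\]

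To justify this reduction I would first note that every element of $I_P^\lambda$ can be put in the form $V_{q_1}^\ast V_{p_1}\cdots V_{q_n}^\ast V_{p_n}$: adjacent letters of the same type merge via $V_pV_q=V_{pq}$ and $V_p^\ast V_q^\ast=V_{qp}^\ast$, and the ends are padded using $V_1=I$. Since the map sending a partial bijection of $P$ to the operator it induces on $\ell^2(P)$ is injective and carries $\lambda_p$ to $V_p$, the Toeplitz products $V_{q_1}^\ast V_{p_1}\cdots V_{q_n}^\ast V_{p_n}$ are exactly the induced operators of the elements of $I_P^\lambda$. Hence $E_P U_g E_P$ has the Toeplitz form if and only if $\mu_g\in I_P^\lambda$, which gives the displayed equivalence.

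Granting it, the forward direction is short: if $\mu_g\in I_P^\lambda$ for all relevant $g$, then $\mu_g$ is the maximum of its $\sigma$-fibre, every nonzero $f$ sits below the maximal element $\mu_{\sigma(f)}$, and any maximal $m\geq f$ satisfies $\sigma(m)=\sigma(f)$ and $m\leq\mu_{\sigma(f)}$, forcing $m=\mu_{\sigma(f)}$. Thus $I_P^\lambda$ is $0$-$F$-inverse with maximal elements exactly the $\mu_g$, on which $\sigma$ is manifestly injective.

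For the converse I would fix $g$ with $E_P U_g E_P\neq 0$, choose $q_0\in P$ with $gq_0\in P$, and set $f_0=\lambda_{gq_0}\lambda_{q_0}^\ast\in I_P^\lambda$, which has $\sigma(f_0)=g$ and $d(f_0)=q_0P\neq\varnothing$. Let $m$ be the unique maximal element with $f_0\leq m$; then $\sigma(m)=g$ and $m\leq\mu_g$. The hard part, and the only place both hypotheses are used, is to show $d(m)=d(\mu_g)$: for each $x\in P$ with $gx\in P$ the element $f_x=\lambda_{gx}\lambda_x^\ast$ has $\sigma(f_x)=g$ and lies under its own maximal element $m_x$; injectivity of $\sigma$ on maximal elements forces $m_x=m$, so $x\in d(f_x)\subseteq d(m)$. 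Exhausting $x$ yields $d(m)=\{x:gx\in P\}=d(\mu_g)$, hence $m=\mu_g\in I_P^\lambda$, and the displayed equivalence delivers the Toeplitz condition. The main obstacle is thus setting up the normal-form-plus-faithfulness reduction so that ``Toeplitz'' and ``$\mu_g\in I_P^\lambda$'' become interchangeable, after which the domain-exhaustion argument is the substantive step.
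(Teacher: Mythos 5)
Your proof is correct and follows essentially the same route as the paper: your $\mu_g$ is exactly the paper's $\alpha_g:(g^{-1})P\cap P\to gP\cap P$, the reduction of the Toeplitz condition to ``$\mu_g\in I_P^\lambda$'' via the faithful representation $f\mapsto\omega(f)$ with $\omega(\alpha_g)=E_PU_gE_P$ is the paper's key step, and both directions (maximality of $\mu_g$ over the $\sigma$-fibre via equation \eqref{eq:gradingaction}, and the converse via elements $\lambda_{gx}\lambda_x^\ast$ exhausting $d(\alpha_g)$) match the paper's argument. Your write-up is in fact slightly more explicit than the paper's at two points it leaves implicit: the normal form justifying the reduction, and the use of injectivity on maximal elements to force $m_x=m$.
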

\begin{proof}
For $g\in G$, let $\alpha_g:(g^{-1})P\cap P\to gP\cap P$ be given by $\alpha_g(p)=gp$. Then $\alpha_g\in I_P$. We claim that $P\subset G$ satisfies the Toeplitz condition if and only if $\alpha_g\in I_P^\lambda$ for each $g\in G$. To verify this, let $\omega:I_P\to B(\ell^2(P))$ be given by
\[
\omega(f)\varepsilon_p=\begin{cases}
\varepsilon_{f(p)} \mbox{ if }p\in d(f)\\
0\mbox{ otherwise.}
\end{cases}
\]
Note that $\omega$ is injective and that $\omega(\lambda_p)=V_p$ for each $p\in P$. Moreover, for any $g\in G$, it is easy to verify that $\omega(\alpha_g)=E_P U_g E_P$.

Fix $g\in G$ and assume without loss of generality that $\alpha_g\neq 0$. Suppose $\alpha_g\in I_P^\lambda$. Clearly $\sigma(\alpha_g)=g$. Suppose $f\in I_P^\lambda$ satisfies $\sigma(f)=g$. By equation \eqref{eq:gradingaction} it follows that $f\leq\alpha_g$. This implies that $\alpha_g$ is the unique maximal element of $\sigma^{-1}(g)$.

Conversely, suppose $I_P^\lambda$ is $0$-$F$-inverse and that $\sigma$ is injective on $M(I_P^\lambda)$. Let $f\in\sigma^{-1}(g)$ be the maximal element. If we can show that $d(f)=d(\alpha_g)=(g^{-1})P\cap P$, then equation \eqref{eq:gradingaction} gives us that $f=\alpha_g$. We already noted that equation \eqref{eq:gradingaction} implies that $d(f)\subset d(\alpha_g)$, so let $p\in (g^{-1})P\cap P$. Then there is some $q\in P$ such that $p=g^{-1}q$, so $g=qp^{-1}$. Then $\sigma(\lambda_q\lambda_p^\ast)=g$ and $p\in pP=d(\lambda_q\lambda_p^\ast)$. Since $f$ is maximal in $\sigma^{-1}(g)$, $p\in d(\lambda_q\lambda_p^\ast)\subset d(f)$, so we are done.
\end{proof}

The computation of the $K$-theory for $C^\ast_r(P)$ in the case when $\mathcal{J}$ is independent and $P\subset G$ satisfies the Toeplitz condition is done in details in \cite{cuntz_echterhoff_li12}. The same computation could now be obtained using Theorem \ref{thm:maintheorem} and Proposition \ref{prop:toeplitz}.

\section{Graph inverse semigroups}
We will use \cite{raeburn05} as our main source on graphs and graph $C^\ast$-algebras. A (discrete) \emph{directed graph} $\mathcal{E}=(\mathcal{E}^0,\mathcal{E}^1,s,r)$ consists of countable sets $\mathcal{E}^0$, $\mathcal{E}^1$ and functions $s,r:\mathcal{E}^1\to\mathcal{E}^0$. The elements of $\mathcal{E}^0$ are called the \emph{vertices} of $\mathcal{E}$, while the elements of $\mathcal{E}^1$ are called the \emph{edges} of $\mathcal{E}$. A \emph{finite path} in $\mathcal{E}$ is either a single vertex or a finite string $\mu_n\ldots\mu_1$ of edges such that $r(\mu_k)=s(\mu_{k+1})$ for any $0\leq k\leq n-1$. Let $\mathcal{E}^\ast$ be the set of all finite paths in $\mathcal{E}$. For a path $\mu=\mu_n\cdots\mu_1\in\mathcal{E}^\ast$, define $s(\mu)=s(\mu_1)$ and $r(\mu)=r(\mu_n)$. If $\mu$ is a single vertex, then define $s(\mu)=r(\mu)=\mu$. One can define a partial product on $\mathcal{E}^\ast$ as follows. If $\mu$ and $\nu$ are strings of edges, then their product $\mu\nu$ is given by concatenation if the resulting string is a path, otherwise the product is undefined. If $\mu$ is a string of edges and $v$ is a vertex, then $\mu v=\mu$ if $s(\mu)=v$ and $v\mu=\mu$ if $r(\mu)=v$. If $v,w$ are vertices, then $vw=v$ if $v=w$.

In \cite{paterson02} (see also \cite{lawson99}) the \emph{graph inverse semigroup} $S_\mathcal{E}$ is defined to be the set
\[
S_{\mathcal{E}}=\{(\mu,\nu)\in \mathcal{E}^\ast\times \mathcal{E}^\ast:s(\mu)=s(\nu)\}\cup\{0\}
\]
with all products not involving $0$ given by
\[
(\mu,\nu)(\alpha,\beta)=\begin{cases}
(\mu,\beta\nu')\mbox{ if } \nu=\alpha\nu'\\
(\mu\alpha',\beta)\mbox{ if } \alpha=\nu\alpha'\\
0\mbox{ otherwise.}                         
\end{cases}
\]
The $\ast$-operation is given by $(\mu,\nu)^\ast=(\nu,\mu)$. For $(\mu,\nu)\in S_\mathcal{E}$, we have
\[
(\mu,\nu)^\ast (\mu,\nu)=(\nu,\mu)(\mu,\nu)=(\nu,\nu)
\]
This shows that $E(S_\mathcal{E})^\times=\{(\nu,\nu):\nu\in\mathcal{E}^\ast\}$. We have $(\mu,\nu)\leq (\alpha,\beta)$ if and only if there is some $\rho\in\mathcal{E}^\ast$ such that $\mu=\alpha\rho$ and $\nu=\beta\rho$. In particular we get that for $v\in\mathcal{E}^0$ we have $(\mu,\mu)\leq (v,v)$ if and only if $\mu=v\mu$ if and only if $r(\mu)=v$. The maximal elements of $S_\mathcal{E}^1$ are
\[
M(S_\mathcal{E}^1)=\{1\}\cup\{(\mu,\nu):\mu\mbox{ and }\nu\mbox{ have no common initial segment.}\}
\]
Every element is beneath such a maximal element, so $S_\mathcal{E}^1$ is $0$-$F$-inverse \cite{lawson01}. Let $\mathbb{F}$ be the free group on the alphabet $\mathcal{E}^1$. Define $h:\mathcal{E}^\ast\to\mathbb{F}$ by
\[
h(\mu)=\begin{cases}
1\mbox{ if }\mu\mbox{ is a vertex,}\\
\mu\mbox{ otherwise.}
\end{cases} 
\]
Define $\sigma:(S_\mathcal{E}^1)^\times\to\mathbb{F}$ by $\sigma((\mu,\nu))=h(\mu)h(\mu)^{-1}$. It is easy to check that $\sigma$ is a morphism and that it is injective on the maximal elements of $S_\mathcal{E}^1$. The next lemma lets us identify $\widetilde{E(S_\mathcal{E})}=E(S_\mathcal{E})/\approx$ with $\mathcal{E}^0$.

\begin{lemma}\label{lem:graphindex}
Let $\mu,\nu\in\mathcal{E}^\ast$. Then $(\mu,\mu)\approx (\nu,\nu)$ if and only if $s(\mu)=s(\nu)$.
\end{lemma}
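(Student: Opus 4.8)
The plan is to unwind the definition of $\approx$ directly, using the explicit formulas already recorded for $S_\mathcal{E}$. Recall that $e\approx f$ means there is some $s\in S_\mathcal{E}^\times$ with $s^\ast s=e$ and $ss^\ast=f$. Every nonzero element of $S_\mathcal{E}$ has the form $s=(\alpha,\beta)$ with $s(\alpha)=s(\beta)$ (here $s(\cdot)$ is the source map), and we have already computed $(\alpha,\beta)^\ast(\alpha,\beta)=(\beta,\beta)$, together with the dual identity $(\alpha,\beta)(\alpha,\beta)^\ast=(\alpha,\alpha)$. So the whole argument reduces to pinning down which single element $s$ could possibly witness $(\mu,\mu)\approx(\nu,\nu)$.

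For the forward direction I would start from a witnessing $s=(\alpha,\beta)$ and read off the constraints: $s^\ast s=(\beta,\beta)=(\mu,\mu)$ forces $\beta=\mu$, while $ss^\ast=(\alpha,\alpha)=(\nu,\nu)$ forces $\alpha=\nu$. Hence $s=(\nu,\mu)$, and the mere fact that this is a \emph{nonzero} element of $S_\mathcal{E}$ already encodes the defining source condition $s(\nu)=s(\mu)$, which is exactly the desired conclusion $s(\mu)=s(\nu)$.

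For the converse, assuming $s(\mu)=s(\nu)$, I would simply exhibit the candidate $s=(\nu,\mu)$: the hypothesis guarantees that this pair satisfies the source-matching condition in the definition of $S_\mathcal{E}$, so it is a legitimate nonzero element, and a one-line computation with the $\ast$-operation and the product formula gives $s^\ast s=(\mu,\mu)$ and $ss^\ast=(\nu,\nu)$. Therefore $(\mu,\mu)\approx(\nu,\nu)$.

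There is no genuine obstacle here; the lemma is essentially a bookkeeping statement about the structure of the graph inverse semigroup. The only point requiring a little care is to keep the order of the two coordinates straight when applying $\ast$ and the product, so that one correctly concludes $\beta=\mu$ and $\alpha=\nu$ (and not the reverse), and to observe that validity of the pair $(\nu,\mu)$ as an element of $S_\mathcal{E}$ is precisely the source condition $s(\nu)=s(\mu)$ and nothing more.
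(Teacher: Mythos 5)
Your proof is correct and follows essentially the same route as the paper's: both directions come down to the identities $(\alpha,\beta)^\ast(\alpha,\beta)=(\beta,\beta)$ and $(\alpha,\beta)(\alpha,\beta)^\ast=(\alpha,\alpha)$, with the witness $(\nu,\mu)$ (the paper uses $(\mu,\nu)$, which is the same argument up to the symmetry of $\approx$) and the observation that membership of the pair in $S_\mathcal{E}$ is exactly the source condition. No gaps.
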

\begin{proof}
Fix $\mu,\nu\in\mathcal{E}^\ast$. Suppose $s(\mu)=s(\nu)$. Then $(\nu,\nu)=(\mu,\nu)^\ast (\mu,\nu)$ and $(\mu,\mu)=(\mu,\nu)(\mu,\nu)^\ast$.

Conversely, suppose $(\nu,\nu)\approx (\mu,\mu)$. Then there exists $(\alpha,\beta)\in S_\mathcal{E}$ such that $(\nu,\nu)=(\alpha,\beta)^\ast (\alpha,\beta)=(\beta,\beta)$. and $(\mu,\mu)=(\alpha,\beta)(\alpha,\beta)^\ast=(\alpha,\alpha)$. It follows that $\nu=\beta$ and $\mu=\alpha$, so $s(\mu)=s(\alpha)=s(\beta)=s(\nu)$.
\end{proof}

\begin{lemma}\label{lem:graphstabilizer}
For each $\mu\in\mathcal{E}^\ast$, $G_{(\mu,\mu)}$ is the trivial group.
\end{lemma}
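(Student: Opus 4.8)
The plan is to use the last of the four equivalent descriptions of the stabilizer group recorded just before Lemma~\ref{lem:stabgroup}, namely
\[
G_{(\mu,\mu)}=\{\sigma(s):s\in S_\mathcal{E}^\times,\ ss^\ast=s^\ast s=(\mu,\mu)\}.
\]
So I would begin by identifying every $s\in S_\mathcal{E}^\times$ whose source and range idempotents both equal $(\mu,\mu)$. Writing $s=(\alpha,\beta)$ with $s(\alpha)=s(\beta)$, the computation already carried out in the excerpt gives $s^\ast s=(\beta,\mu)(\mu,\beta)$ type formula $=(\beta,\beta)$, and applying the $\ast$ together with the same product rule gives $ss^\ast=(\alpha,\alpha)$.

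Imposing $s^\ast s=ss^\ast=(\mu,\mu)$ then forces $(\beta,\beta)=(\mu,\mu)$ and $(\alpha,\alpha)=(\mu,\mu)$, hence $\alpha=\beta=\mu$. Thus the only element of $S_\mathcal{E}^\times$ with both idempotents equal to $(\mu,\mu)$ is $s=(\mu,\mu)$ itself, which is an idempotent. Since $\sigma(e)=1$ for every idempotent $e\in E$, we obtain $\sigma(s)=1$, and therefore $G_{(\mu,\mu)}=\{1\}$.

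The argument is essentially immediate once the explicit product in $S_\mathcal{E}$ is in hand, so there is no genuine obstacle. The only point worth double-checking is the identity $ss^\ast=(\alpha,\alpha)$ for $s=(\alpha,\beta)$: it follows by evaluating $(\alpha,\beta)(\beta,\alpha)$ with the product rule, where the string-matching condition is trivially satisfied because the inner labels coincide, exactly mirroring the displayed computation of $(\mu,\nu)^\ast(\mu,\nu)=(\nu,\nu)$ in the excerpt.
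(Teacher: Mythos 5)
Your proposal is correct and follows essentially the same route as the paper: impose $s^\ast s = ss^\ast = (\mu,\mu)$ on $s=(\alpha,\beta)$, use the product rule to get $s^\ast s=(\beta,\beta)$ and $ss^\ast=(\alpha,\alpha)$, conclude $\alpha=\beta=\mu$, and hence $\sigma(s)=1$. The only cosmetic difference is that you invoke the general fact $\sigma(e)=1$ for idempotents, while the paper evaluates $\sigma((\mu,\mu))=h(\mu)h(\mu)^{-1}=1$ explicitly; these are interchangeable.
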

\begin{proof}
Suppose $(\alpha,\beta)\in S_\mathcal{E}$ satisfies $(\mu,\mu)=(\beta,\alpha)(\alpha,\beta)=(\alpha,\beta)(\beta,\alpha)$. Then clearly $\alpha=\beta=\mu$, so $\sigma((\alpha,\beta))=h(\mu)h(\mu)^{-1}=1$.
\end{proof}

It is possible to show that $C^\ast_r(S_\mathcal{E})$ is canonically isomorphic to the Toeplitz $C^\ast$-algebra of $\mathcal{E}$ as defined in \cite{raeburn05}. Thus the next proposition also follows from \cite[Theorem 1.1]{burgstaller09}.

\begin{proposition}
We have $K_0(C^\ast_r(S_\mathcal{E}))=\bigoplus_{\mathcal{E}^0}\mathbb{Z}$ and $K_1(C^\ast_r(S_\mathcal{E}))=0$.
\end{proposition}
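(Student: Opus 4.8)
The plan is to apply Theorem \ref{thm:maintheorem} directly to the graph inverse semigroup $S_\mathcal{E}$, with the two preceding lemmas supplying the index set and the stabilizer groups. First I would verify that all the hypotheses of Theorem \ref{thm:maintheorem} are in place. The discussion preceding Lemma \ref{lem:graphindex} already establishes that $S_\mathcal{E}^1$ is $0$-$F$-inverse and that the morphism $\sigma:(S_\mathcal{E}^1)^\times\to\mathbb{F}$ into the free group $\mathbb{F}$ on $\mathcal{E}^1$ is injective on the maximal elements $M(S_\mathcal{E}^1)$. The one remaining hypothesis is that the target group be a-T-menable; here I would invoke the standard fact that every free group has the Haagerup property, so $\mathbb{F}$ is a-T-menable. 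With these checks done, Theorem \ref{thm:maintheorem} applies and yields
\[
K_\ast(C^\ast_r(S_\mathcal{E}))\simeq\bigoplus_{[e]\in\widetilde{E}}K_\ast(C^\ast_r(G_e)).
\]

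Next I would identify the two ingredients of the right-hand side using the lemmas. By Lemma \ref{lem:graphindex}, the relation $\approx$ on $E(S_\mathcal{E})^\times=\{(\nu,\nu):\nu\in\mathcal{E}^\ast\}$ collapses exactly when the source vertices agree, so the map $(\mu,\mu)\mapsto s(\mu)$ descends to a bijection $\widetilde{E(S_\mathcal{E})}\to\mathcal{E}^0$; this lets me reindex the direct sum over the vertex set $\mathcal{E}^0$. By Lemma \ref{lem:graphstabilizer}, each stabilizer group $G_{(\mu,\mu)}$ is trivial, hence $C^\ast_r(G_{(\mu,\mu)})\cong\mathbb{C}$. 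Since $K_0(\mathbb{C})\cong\mathbb{Z}$ and $K_1(\mathbb{C})=0$, every summand contributes a single copy of $\mathbb{Z}$ in degree zero and nothing in degree one.

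Combining these, I would conclude $K_0(C^\ast_r(S_\mathcal{E}))\simeq\bigoplus_{\mathcal{E}^0}\mathbb{Z}$ and $K_1(C^\ast_r(S_\mathcal{E}))=0$, as claimed. I do not expect any genuine obstacle in this argument, since all the substantive work has already been carried out: Theorem \ref{thm:maintheorem} provides the decomposition, and Lemmas \ref{lem:graphindex} and \ref{lem:graphstabilizer} compute the index set and the stabilizers explicitly. The only point requiring external input is the a-T-menability of the free group, and even this is a well-documented instance of the Haagerup property rather than something needing a fresh argument. The proof is therefore essentially a matter of assembling the pieces in the correct order.
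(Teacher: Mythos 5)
Your proposal is correct and follows the paper's own proof essentially verbatim: both apply Theorem \ref{thm:maintheorem} with the a-T-menability of the free group $\mathbb{F}$, use Lemma \ref{lem:graphindex} to identify $\widetilde{E(S_\mathcal{E})}$ with $\mathcal{E}^0$, and use Lemma \ref{lem:graphstabilizer} to reduce each summand to $K_\ast(\mathbb{C})$. No gaps; your slightly more explicit verification of the hypotheses (citing the earlier discussion of $0$-$F$-inverseness and injectivity of $\sigma$ on $M(S_\mathcal{E}^1)$) is just a fuller write-up of the same argument.
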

\begin{proof}
By Lemma \ref{lem:graphstabilizer}, $G_e=\{1\}$ for each $e\in E(S_\mathcal{E})$. Then $K_0(C^\ast_r(G_e))=K_0(\mathbb{C})=\mathbb{Z}$ and $K_1(C^\ast_r(\{1\}))=K_1(\mathbb{C})=0$. Since $\mathbb{F}$ is free, it is a-T-menable \cite{aTmenability01}. The result now follows from Lemma \ref{lem:graphindex} and Theorem \ref{thm:maintheorem}.
\end{proof}

\section{Tiling inverse semigroups}
The tiling inverse semigroups were first introduced in \cite{kellendonk97,kellendonk97b}. A \emph{tile} in $\mathbb{R}^n$ is a subset of $\mathbb{R}^n$ homeomorphic to the closed unit ball. A \emph{partial tiling} is a collection of tiles that have pairwise disjoint interiors. The \emph{support} of a partial tiling is the union of its elements. A \emph{tiling} is a partial tiling with support equal to $\mathbb{R}^n$. A \emph{patch} is a finite partial tiling.

For any tile $t$ and $x\in\mathbb{R}^n$, let $t+x$ be the translation of $t$ by $x$. For any partial tiling $P$, let $P+x=\{t+x:t\in P\}$. Let $T$ be a tiling. Let $\mathcal{M}$ be the set of subpatches $P$ of $T$ such that $P$ has connected support. For $P,Q\in\mathcal{M}$ and $t_1,t_2\in P$, $r_1,r_2\in Q$, we say that $(t_1,P,t_2)\sim (r_1,Q,r_2)$ if there is some $x\in\mathbb{R}^n$ such that $t_1+x=r_1,t_2+x=r_2$ and $P+x=Q$. The equivalence class of $(t_1,P,t_2)$ is denoted $[t_1,P,t_2]$, and is called a \emph{doubly pointed pattern class}. Let
\[
S_T=\{[t_1,P,t_2]:P\in\mathcal{M},t_1,t_2\in P\}\cup\{0\}
\]
We define an inverse semigroup structure on $S_T$. Let $[t_1,P,t_2],[r_1,Q,r_2]\in S_T$. Whenever there are $x,y\in\mathbb{R}^n$ such that $P+x$ and $Q+y$ are patches in $T$ and $t_2+x=r_1+y$, define
\[
[t_1,P,t_2][r_1,Q,r_2]=[t_1+x,(P+x)\cup (Q+y),r_2+y].
\]
All other products are defined to be $0$. The $\ast$-operation is given by
\[
[t_1,P,t_2]^\ast=[t_2,P,t_1]
\]
$S_T$ is called the \emph{connected tiling semigroup} of $T$. If one drops the requirement that the elements of $\mathcal{M}$ have connected support, one gets the \emph{tiling semigroup} $\Gamma_T$.

It was shown in \cite{exel_goncalves_starling12} that when $T$ is so-called strongly aperiodic, repetitive and has finite local complexity (see the original paper for definitions), the tiling $C^\ast$-algebra $A_T$ from  \cite{kellendonk95,kellendonk_putnam00} is the tight $C^\ast$-algebra of $S_T$. We do not know if the reduced $C^\ast$-algebra of $S_T$ or $\Gamma_T$ has been previously studied.

Clearly, $E(S_T)=\{[t,P,t]:P\in\mathcal{M},t\in P\}$. It is easy to see that $[t,P,t]\leq [r,Q,r]$ if and only if there is an $x\in\mathbb{R}^n$ such that $r=t+x$ and $Q\subset P+x$. Say that two partial tilings are \emph{congruent} if one is a translation of the other. Let $L$ be the set of congruence classes of elements in $\mathcal{M}$. The next lemma lets us identify $\widetilde{E(S_T)}$ (respectively $\widetilde{E(\Gamma_T)}$) with $L$.

\begin{lemma}\label{lem:tiling1}
Let $P,Q\in\mathcal{M}$ and $t\in P$, $r\in Q$. Then $[t,P,t]\approx [r,Q,r]$ if and only if $P$ and $Q$ are congruent.
\end{lemma}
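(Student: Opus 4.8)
The plan is to reduce the statement to a single computation of the source and range idempotents of a general nonzero element of $S_T$, in the same spirit as the identity $(\mu,\nu)^\ast(\mu,\nu)=(\nu,\nu)$ used for Lemma~\ref{lem:graphindex}. First I would record that for any nonzero $s=[a,R,b]\in S_T$ one has
\[
s^\ast s=[b,R,a][a,R,b]=[b,R,b],\qquad ss^\ast=[a,R,b][b,R,a]=[a,R,a].
\]
Both equalities come from the multiplication rule with $x=y=0$, which is legitimate because $R\in\mathcal{M}$ is itself a subpatch of $T$, so $R+0=R$ is a patch in $T$. This isolates the only nontrivial algebra and makes the two directions essentially bookkeeping.

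For the implication from congruence to $\approx$, I would pick $x\in\mathbb{R}^n$ with $Q=P+x$. Since $t\in P$, the translate $t+x$ lies in $Q$, so $s=[r,Q,t+x]$ is a well-defined element of $S_T$. The identities above give $ss^\ast=[r,Q,r]$ and $s^\ast s=[t+x,Q,t+x]$, and translating the latter triple by $-x$ (using $Q-x=P$) shows $[t+x,Q,t+x]=[t,P,t]$. Thus $s$ witnesses $[t,P,t]\approx[r,Q,r]$.

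For the converse I would take a nonzero $s=[t_1,R,t_2]$ with $s^\ast s=[t,P,t]$ and $ss^\ast=[r,Q,r]$, and read off from the displayed identities that $[t_2,R,t_2]=[t,P,t]$ and $[t_1,R,t_1]=[r,Q,r]$. Unpacking the equivalence relation $\sim$ in each of these produces translation vectors $y,z\in\mathbb{R}^n$ with $R+y=P$ and $R+z=Q$, so $P$ and $Q$ are both congruent to $R$ and hence to each other, congruence being an equivalence relation. The whole argument is routine; the only points needing care are checking that the relevant products are actually defined (i.e. that the translates involved are genuine patches of $T$) and tracking the translation vectors correctly when unfolding $\sim$, which is where I expect whatever minor friction there is to occur.
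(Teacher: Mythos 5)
Your proof is correct and follows essentially the same route as the paper's: you exhibit the witness $[r,Q,t+x]$ (the adjoint of the paper's $[t+x,Q,r]$) for the forward direction, and in the converse you unpack $s^\ast s=[b,R,b]$ and $ss^\ast=[a,R,a]$ exactly as the paper does, concluding that $P$ and $Q$ are both congruent to $R$. Your preliminary isolation of the general identities $s^\ast s=[b,R,b]$, $ss^\ast=[a,R,a]$ (valid since $R\in\mathcal{M}$ is a subpatch of $T$, so the product is defined with $x=y=0$) is just a tidy packaging of the computation the paper performs inline.
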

\begin{proof}
Suppose $P$ and $Q$ are congruent. Then there is some $x\in\mathbb{R}^n$ such that $Q=P+x$. So $t+x\in Q$. Moreover, it is easy to check that $[r,Q,r]=[t+x,Q,r]^\ast [t+x,Q,r]$ and
\[
[t+x,Q,r][t+x,Q,r]^\ast=[t+x,Q,t+x]=[t,Q-x,t]=[t,P,t].
\]
So $[t,P,t]\approx [r,Q,r]$.

Conversely, suppose $[t,P,t]\approx [r,Q,r]$. Then there is some $R\in\mathcal{M}$ and $a,b\in R$ such that
\begin{align*}
[t,P,t]=[a,R,b]^\ast [a,R,b]=[b,R,b],\\ 
[r,Q,r]=[a,R,b][a,R,b]^\ast=[a,R,a]
\end{align*}
It follows that $P$ and $Q$ are both congruent to $R$ and thus to each other.
\end{proof}

$S_T$ is not $0$-$F$-inverse in general. It is $0$-$F$-inverse when $n=1$ \cite[Proposition 4.2.1]{kellendonk_lawson04}. In this case, the universal grading of $S_T$ is well understood.

\begin{lemma}\label{lem:tiling2}
For each $e\in E(S_T)^\times$, $G_e$ is the trivial group.
\end{lemma}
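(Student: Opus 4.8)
The plan is to show that every $s\in S_T^\times$ with $s^\ast s=ss^\ast=e$ is in fact an idempotent, so that $\sigma(s)=1$; this immediately gives $G_e=\{\sigma(s):ss^\ast=s^\ast s=e\}=\{1\}$, the reverse inclusion being witnessed by $s=e$ (which yields $1=\sigma(e)\in G_e$).

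Write $e=[t,P,t]$ and let $s=[t_1,Q,t_2]$ be an arbitrary element satisfying $s^\ast s=ss^\ast=e$. First I would compute the two idempotents $s^\ast s$ and $ss^\ast$ directly from the product formula on $S_T$. Since $Q\in\mathcal{M}$ is itself a subpatch of $T$, the product $[t_2,Q,t_1][t_1,Q,t_2]$ can be formed with translation parameters $x=y=0$, giving $s^\ast s=[t_2,Q,t_2]$, and symmetrically $ss^\ast=[t_1,Q,t_1]$.

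Setting $[t_1,Q,t_1]=[t,P,t]$ and $[t_2,Q,t_2]=[t,P,t]$ and unwinding the defining equivalence on doubly pointed pattern classes, I obtain vectors $x,y\in\mathbb{R}^n$ with $Q+x=P=Q+y$ and $t_1+x=t=t_2+y$. Comparing the two expressions for $P$ gives $Q+(x-y)=Q$, so the patch $Q$ is invariant under translation by $x-y$. The crux of the argument, and the only place where anything geometric enters, is that a patch is a finite partial tiling and hence has bounded nonempty support; a bounded nonempty subset of $\mathbb{R}^n$ can be invariant under a translation by a vector $v$ only when $v=0$ (otherwise iterating the translation pushes a point of the support off to infinity). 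This forces $x=y$, whence $t_1=t_2$ and $s=[t_1,Q,t_1]\in E(S_T)$, so $\sigma(s)=1$.

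I expect the only subtlety to be the careful bookkeeping of the translation parameters in the equivalence relation when identifying $s^\ast s$ and $ss^\ast$ with $e$; both the product computation and the bounded-support observation are routine. Note that the dimension $n$ plays no role in this argument, so the statement holds verbatim for $S_T$ in any dimension (and indeed for $\Gamma_T$); the restriction to $n=1$ is required only to guarantee that $S_T$ is $0$-$F$-inverse, so that Theorem \ref{thm:maintheorem} may be applied.
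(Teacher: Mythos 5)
Your proof is correct and takes essentially the same route as the paper's: compute $s^\ast s=[t_2,Q,t_2]$ and $ss^\ast=[t_1,Q,t_1]$, equate both with $e=[t,P,t]$, deduce that the two marked tiles coincide, and conclude that $s$ is idempotent so that $\sigma(s)=1$. The only difference is that you spell out the bounded-support argument (a nonempty finite patch cannot be invariant under a nonzero translation) that the paper compresses into ``This shows that $a=b$'', and your closing remark is also consistent with the paper, where the restriction to $n=1$ is used only to guarantee the $0$-$F$-inverse property, not in this lemma.
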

\begin{proof}
Let $P\in\mathcal{M}$ and $t\in P$. Suppose that $R\in\mathcal{M}$ and $a,b\in R$ satisfy
\begin{align*}
[t,P,t]=[a,R,b]^\ast [a,R,b]=[b,R,b],\\ 
[t,P,t]=[a,R,b][a,R,b]^\ast=[a,R,a]
\end{align*}
This shows that $a=b$, so $[a,R,b]$ is idempotent, and $\sigma([a,R,b])=1$ regardless of what $\sigma$ is.
\end{proof} 

\begin{proposition}
Let $T$ be a one-dimensional tiling. Then $K_0(C^\ast_r(S_T))=\bigoplus_L\mathbb{Z}$ and $K_1(C^\ast_r(S_T))=0$.
\end{proposition}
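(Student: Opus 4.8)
The plan is to obtain the result as a direct application of Theorem~\ref{thm:maintheorem}, so the work is entirely in verifying its hypotheses for $S_T$ and then reading off the answer from the two preceding lemmas. Since $T$ is one-dimensional, $S_T$ is $0$-$F$-inverse by \cite[Proposition 4.2.1]{kellendonk_lawson04}, which is the standing hypothesis on the semigroup. As $S_T$ has no identity we pass to $S_T^1=S_T\cup\{1\}$, and it remains to produce a group $G$ together with a morphism $\sigma\colon(S_T^1)^\times\to G$ that is injective on $M(S_T^1)$, with $G$ a-T-menable.

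For the grading I would take $\sigma$ to be the universal grading of $S_T^1$ into its maximal group image $G$. Injectivity on the maximal elements is then essentially forced by the $0$-$F$-inverse structure: every idempotent $[t,P,t]$ lies below $1$, so the only maximal idempotent is $1$ itself, while each non-identity element of $G$ is the image of a unique non-idempotent maximal element. Hence distinct elements of $M(S_T^1)$ have distinct images, exactly as in the graph case, where $(v,v)\leq 1$ for each vertex. Since $T$ is one-dimensional, this universal grading is well understood \cite{kellendonk_lawson04}; the one non-formal ingredient, and the step I expect to be the main obstacle, is confirming that $G$ has the Haagerup property. By analogy with the graph-inverse-semigroup computation, where one invokes a-T-menability of the free group $\mathbb{F}$ via \cite{aTmenability01}, I anticipate that $G$ here is of a manifestly a-T-menable type (free, or abelian via the translation displacement), so that \cite{aTmenability01} again applies.

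Granting these hypotheses, the computation is mechanical and mirrors the graph case step for step. By Lemma~\ref{lem:tiling1} the relation $\approx$ on $E(S_T)^\times$ identifies two idempotents precisely when their underlying patches are congruent, so $\widetilde{E(S_T)}$ is canonically identified with the set $L$ of congruence classes of elements of $\mathcal{M}$; this $L$ becomes the index set of the direct sum. By Lemma~\ref{lem:tiling2} every stabiliser group $G_e$ is trivial, so each summand is $K_\ast(C^\ast_r(\{1\}))=K_\ast(\mathbb{C})$, which is $\mathbb{Z}$ in degree $0$ and $0$ in degree $1$.

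Feeding this into Theorem~\ref{thm:maintheorem} then yields
\[
K_0(C^\ast_r(S_T))\simeq\bigoplus_{[e]\in\widetilde{E(S_T)}}K_0(C^\ast_r(G_e))\simeq\bigoplus_{L}\mathbb{Z},
\qquad
K_1(C^\ast_r(S_T))=0,
\]
as claimed, with Lemmas~\ref{lem:tiling1} and~\ref{lem:tiling2} playing the roles of Lemmas~\ref{lem:graphindex} and~\ref{lem:graphstabilizer}. In short, apart from verifying a-T-menability of the grading group, the proof is a transcription of the graph computation, and that a-T-menability is the only genuinely substantive point.
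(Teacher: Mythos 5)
Your overall route is the paper's route: invoke \cite[Proposition 4.2.1]{kellendonk_lawson04} for the $0$-$F$-inverse property, use the universal grading, get a-T-menability from freeness of the universal group, and then feed Lemmas \ref{lem:tiling1} and \ref{lem:tiling2} into Theorem \ref{thm:maintheorem}. But there is a genuine gap in how you handle the injectivity of $\sigma$ on maximal elements, and you have misplaced where the real work lies. Your claim that injectivity on $M(S_T^1)$ is ``essentially forced by the $0$-$F$-inverse structure,'' because ``each non-identity element of $G$ is the image of a unique non-idempotent maximal element,'' is circular: that quoted clause \emph{is} the statement to be proved, not a consequence of $0$-$F$-inverseness. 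The $0$-$F$-inverse condition says each nonzero $s\in S$ lies below a unique maximal element; it says nothing about two \emph{distinct} maximal elements having distinct images under a grading. Indeed, the trivial grading to the trivial group is a morphism on any such semigroup and is wildly non-injective on $M(S)$, and even for the universal grading the implication fails in general --- this is exactly why ``strongly $0$-$F$-inverse'' (existence of a grading injective on maximal elements) is a strictly stronger notion than ``$0$-$F$-inverse'' in the literature. The paper is explicit that this verification is the one substantive step: one must go through the actual construction of the universal grading $\sigma$ in \cite{kellendonk_lawson04} and check injectivity on the nonidempotent maximal elements by hand (the paper calls this ``some work, but not hard'' and leaves it to the reader, but it does not pretend it is formal).

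Conversely, the step you single out as ``the main obstacle'' --- a-T-menability of $G$ --- is in fact immediate: \cite[Corollary 4.2.4]{kellendonk_lawson04} states that the universal group of $S_T$ is free for a one-dimensional tiling, and free groups are a-T-menable by \cite{aTmenability01}, just as in the graph case. So your hedged guess there resolves instantly with the right citation, while the point you waved through is the one that needs an argument. Everything after the hypotheses --- the identification of $\widetilde{E(S_T)}$ with $L$ via Lemma \ref{lem:tiling1}, the triviality of the stabilizers $G_e$ via Lemma \ref{lem:tiling2}, and the resulting sum $\bigoplus_L K_\ast(\mathbb{C})$ --- matches the paper and is correct.
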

\begin{proof}
By \cite[Corollary 4.2.4]{kellendonk_lawson04}, the universal group of $S_T$ is free, so it is a-T-menable.To show that $S_T$ is $0$-$F$-inverse it remains to check that the universal grading $\sigma$ is injective on the set of nonidempotent maximal elements in $S_T$. One has then to go through the actual construction of $\sigma$ in \cite{{kellendonk_lawson04}}. This requires some work, but is not hard. We leave this verification to the reader. The rest now follows from Lemmas \ref{lem:tiling1}, \ref{lem:tiling2} and Theorem \ref{thm:maintheorem}.
\end{proof}

Note also that for any $n$-dimensional tiling $T$, the tiling semigroup $\Gamma_T$ has the property that any element is beneath a maximal element \cite{kellendonk_lawson04}. If one can find a good group morphism for $\Gamma_T$, one will get similar results for the $K$-theory of $C^\ast_r(\Gamma_T)$. In \cite{kellendonk_lawson04} Kellendonk and Lawson also showcase other $0$-$F$-inverse semigroups, such as inverse semigroups of point sets in $\mathbb{R}^n$ and point set semigroups of model sets, where such morphisms exist.

\bibliographystyle{plain}
\bibliography{bibliography}

\end{document}